
\documentclass[final]{siamltex}

\usepackage{graphicx}
\usepackage{color}
\usepackage{xcolor}
\usepackage{bm}
\usepackage{amssymb}
\usepackage{amsfonts}
\usepackage{amsmath}
\usepackage{mathtools}
\usepackage{enumerate}
\usepackage{url}

\renewcommand{\tilde}{\widetilde}
\renewcommand{\hat}{\widehat}
\renewcommand{\bar}{\overline}

\input epsf

\newtheorem{remark}{Remark}[section]

\newtheorem{example}[theorem]{Example}
\newtheorem{alg}[theorem]{Algorithm}


\newcommand{\zhc}[1]{\textcolor{black}{#1}}
\newcommand{\rvline}{\hspace*{-\arraycolsep}\vline\hspace*{-\arraycolsep}}

\begin{document}
\title{A Generating Polynomial Based Two-Stage Optimization Method for Tensor Decomposition
\thanks{
This research was partially supported by the National Natural Science Foundation (DMS-2110722, DMS-2309549).}
}

\author{
Zequn Zheng\thanks{{\tt zzheng@lsu.edu},
Department of Mathematics,
Louisiana State University, Baton Rouge, LA USA 70803-4918.   }
\and
 Hongchao Zhang\thanks{{\tt hozhang@math.lsu.edu},
https://math.lsu.edu/$\sim$hozhang,
        Department of Mathematics,
        Louisiana State University, Baton Rouge, LA USA 70803-4918.
        Phone (225) 578-1982. Fax (225) 578-4276.}
\and 
Guangming Zhou\thanks{{\tt zhougm@xtu.edu.cn},
Hunan Key Laboratory for Computation and Simulation in Science and Engineering, School of Mathematics and Computational Science, Xiangtan University,
 Xiangtan, Hunan, China 411105.}
}
\maketitle

\begin{abstract}
The tensnor rank decomposition, also known as canonical polyadic (CP) or simply tensor decomposition, has a long-standing history in multilinear algebra.
However, computing a rank decomposition becomes particularly challenging when the tensor's rank lies between its largest and second-largest dimensions.
Moreover, for high-order tensor decompositions, a common approach is to first find a decomposition of its flattening order-3 tensor, 
where a significant gap often exists between the largest and the second-largest dimension, also making this case crucial in practice.
For such a case, traditional optimization methods, such as the nonlinear least squares or alternating least squares methods, often fail to 
produce correct tensor decompositions. 
There are also direct methods that solve tensor decompositions algebraically. 
However, these methods usually require the tensor decomposition to be unique and can be computationally expensive, especially when the tensor rank is high.
This paper introduces a new generating polynomial (GP) based two-stage algorithm for finding the order-3 nonsymmetric tensor decomposition 
even when the tensor decomposition is not unique,
assuming the rank does not exceed the largest dimension.
The proposed method reformulates the tensor decomposition problem into two sequential optimization problems. 
Notably, if the first-stage optimization yields only a partial solution, it will be effectively utilized in the second stage.
We establish the theoretical equivalence between the CP decomposition and the global minimizers of those two-stage optimization problems. 
Numerical experiments demonstrate that our approach is very efficient and robust, capable of finding tensor decompositions 
in scenarios where the current state-of-the-art methods often fail.
\end{abstract}

\begin{keywords}
Tensor rank decomposition, Canonical polyadic decomposition, Nonsymmetric tensor, Nonconvex optimization, Generating polynomials, Generalized common eigenvectors.
\end{keywords}

\begin{AMS}
15A69;
65F99; 
90C30.

\end{AMS}

\pagestyle{myheadings}
\thispagestyle{plain}
\markboth{Z. ZHENG, H. ZHANG and G. ZHOU}
{Generating Polynomial Based Two-Stage Optimization for Tensor Decomposition}


\section{Introduction}
Tensors, known as higher-order generalizations of matrices, have numerous crucial practical applications. 
They have been widely used to represent multidimensional data such as parameters in a neural network and higher-order moments in statistics. 
In summary, tensors are ubiquitous in statistics \cite{TensorsStatistics,guo2021learning,RungangExactClustering,han2022optimal,Communitydetection_JLLD}, 
neuroscience \cite{Guha_Neuroimaging_2024,HuaTensorRegression}, signal processing 
\cite{Cichocki_2015,MironTDSP2020,Sidiropoulos_TD_SP2017}, and data science 
\cite{learninglatentvariable,girka2023tensor,liu2023tensor,pereira2022tensor,DynamicTensor}.

Denote $\mathcal{F} \in \mathbb{C}^{n_1\times \ldots \times n_m}$ as an order $m$ tensor with dimension $n_1,\ldots,n_m$ over the complex field. It can be represented by a multi-dimensional array 
\begin{equation*}
    \mathcal{F} = (\mathcal{F}_{i_1,\ldots,i_m})_{1\le i_1\le n_1,\ldots,1\le i_m \le n_m}.
\end{equation*}
For vectors ${u}_1 \in \mathbb{C}^{n_1},\ldots, {u}_m \in \mathbb{C}^{n_m}$, their outer product ${u}_1 \otimes {u}_2 \otimes \cdots \otimes {u}_m \in \mathbb{C}^{n_1 \times \ldots \times n_m}$ is defined as
\begin{align}
    ({u}_1 \otimes {u}_2 \otimes \cdots \otimes {u}_m)_{i_1,i_2,\cdots,i_m} = ({u}_1)_{i_1}({u}_2)_{i_2}\cdots({u}_m)_{i_m}.
\end{align}
Tensors that can be written as an outer product of $m$ nonzero vectors are called rank-1 tensors, i.e., ${u}_1 \otimes {u}_2 \otimes \cdots \otimes {u}_m$.
 For an arbitrary tensor $\mathcal{F}$, it always can be written as a summation of rank-1 tensors, that is
\begin{equation} \label{sum_pure}
    \mathcal{F} = \sum_{i=1}^r {u}^{i,1}\otimes \cdots \otimes {u}^{i,m},
\end{equation}
where ${u}^{i,j} \in \mathbb{C}^{n_j} $. The tensor rank of $\mathcal{F}$ is the smallest $r$ in \eqref{sum_pure}, denoted by $\rank(\mathcal{F})$. 
In this case, the shortest decomposition \eqref{sum_pure} is referred as the rank decomposition, or alternatively, the Candecomp-Parafac (CP) Decomposition, 
Canonical Decomposition (CANDECOMP), Parallel Factor Model (PARAFAC), or simply tensor decomposition (TD).
Unlike the rank of a matrix, the tesnors' rank can be far more complicated.
For example, it is common for the rank of a tensor to be greater than all its dimensions. 
Even determining the rank of a general tensor is an NP-hard problem \cite{HillarNPHard2013}.

Given a generic tensor $\mathcal{F}$ and its rank $r$, the tensor decomposition problem aims to find a tensor decomposition as in \eqref{sum_pure}.
This problem will get harder when the rank and order of the tensor increase. 
In this paper, we will focus on order-3 tensors with dimensions $n_1 \geq n_2 \geq n_3$. 
For high-order tensor decompositions, one may first flatten it to cubic tensors and then, find its decomposition based on cubic tensor decompositions. 
For example, a tensor $\mathcal{F}\in \mathbb{C}^{6 \times 6 \times 6 \times 6 }$ can be flattened to an order-$3$ tensor $\mathcal{M}\in \mathbb{C}^{36 \times 6 \times 6 }$ 
and then, find the tensor decomposition of $\mathcal{M}$, from which 
the decomposition of  $\mathcal{F}$ can be recovered if the tensor rank is below a certain bound.
We refer to \cite{Chiantini_2017,NWZ23,Phanreshaping2013} for the details of flattening techniques.
We could divide cubic tensors into the following three cases based on their rank $r$:
\vspace{0.05in}
\begin{center}{\footnotesize
\begin{tabular}{ c c c }
$\bullet$ Low-Rank Case: $r \leq n_2$; & $\bullet$ Middle-Rank Case: $n_2 < r \leq n_1$; & $\bullet$ High-Rank Case: $n_1 < r$. \\ 
\end{tabular}}
\end{center}
\vspace{0.05in}
The most commonly used methods for tensor decomposition are the alternating least squares (ALS) and nonlinear least squares (NLS) methods
 \cite{TensorToolbox,TensorLy,minster2021cp,TensorLab}. 
These methods often work well in practice when the tensor rank is small.
However, they usually fail to find a tensor decomposition and converge to local minimizers when the tensor rank gets higher. 
Figure~\ref{TD:NLSALS} shows the performance of ALS and NLS methods in {\tt Tensorlab} \cite{TensorLab} for finding
 decompositions of randomly generated $(20,15,10)$ tensors with different ranks, where the tensors
were generated with entries following a normal distribution with mean $1,2,3$ and variance $1$.
See the numerical experiment section for the specific settings of the ALS, NLS methods and the `success' rate.
\begin{figure}
    \centering
    \includegraphics[width=0.8\linewidth]{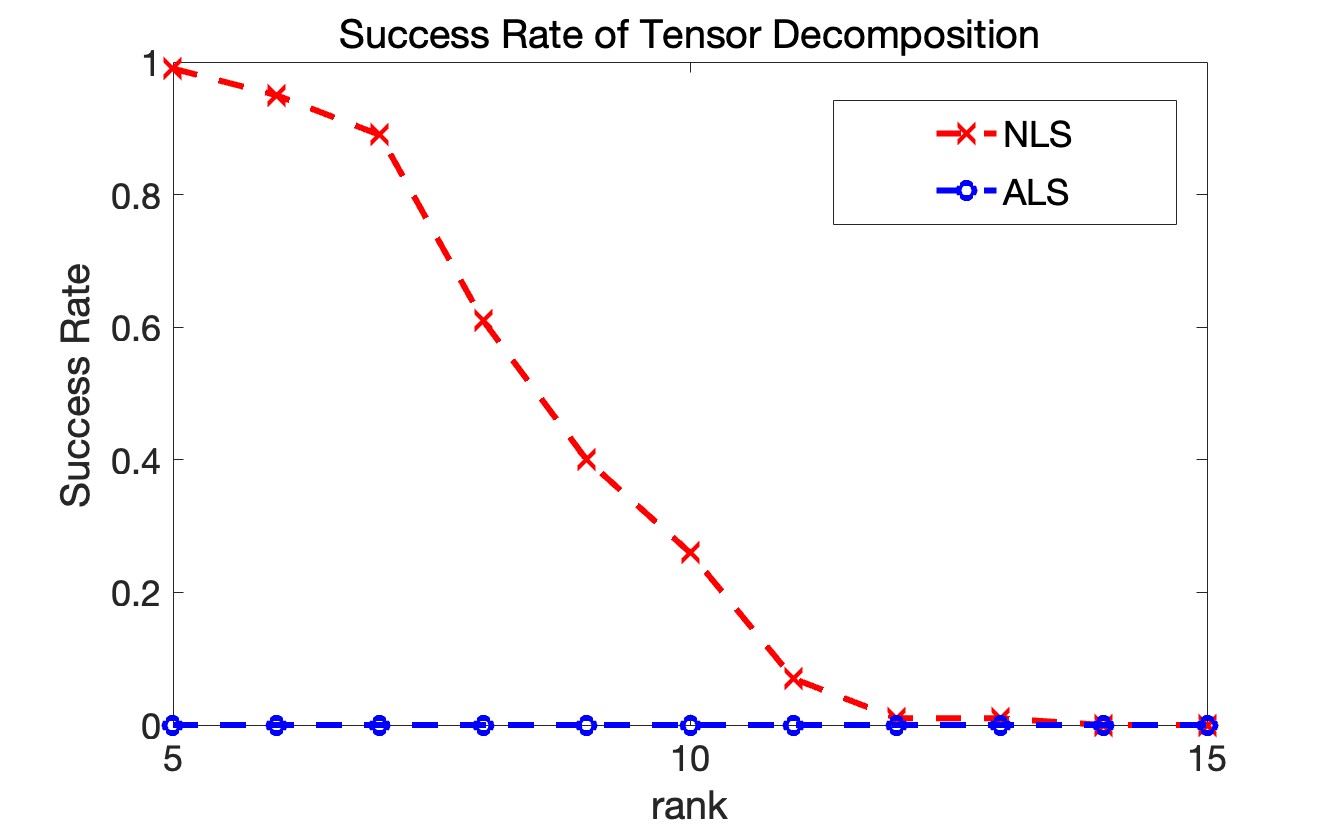}
    \caption{Tensor decomposition success rate for 100 tensors with dimension $(20,15,10)$.}
    \label{TD:NLSALS}
\end{figure}
For this example of using random starting points, the success rate of ALS for finding tensor decomposition is nearly zero, and the success rate of NLS gets close to zero when the rank increases to the Middle-Rank Case.
For a reasonable success rate, ALS usually needs a quite good starting point.
Another classical and well-known tensor decomposition method is the generalized eigenvalue decomposition (GEVD) method \cite{Leurgans1993,Sanchez1990}, 
which selects two tensor slices (also known as matrix pencils) of the tensor and computes the generalized eigenvectors of the two matrices to recover its tensor decomposition.
However, in the generic case, GEVD can only find the tensor decomposition for the Low-Rank Case.

When the tensor rank is higher, the state-of-the-art methods are the the normal form (NF) method \cite{telen2021normal}
and the method by Domanov and De Lathauwer (DDL) \cite{Domanov2017}. 
These are algebraic methods with theoretical guarantees for finding tensor decomposition when the tensor rank is below certain theoretical bounds. 
The NF method has partial theoretical guarantees when the tensor rank falls within the Middle-Rank Case.
Although the DDL method can handle both the Middle-Rank Case and High-Rank Case, its theoretical guarantee relies on an assumption \cite[Theorem~15]{Domanov2017} and 
it is typically unclear whether this assumption would hold for a given generic tensor. 
In addition, both methods need to construct an auxiliary matrix whose size is controlled by an integer parameter $\ell$ (the NF method has two integer parameters, and one of them is usually fixed to be $1$).
A larger $\ell$ has the potential to solve tensor decompositions of higher-rank tensors, but also substantially increases the computational complexity
and memory requirements. 
For the Middle-Rank Case and High-Rank Case, those auxiliary matrices will often be much larger than the tensor size. 
For example, consider a $(n_1,n_2,n_3)=(24,7,5)$ tensor with rank $24$, DDL method needs to set $\ell=2$ with the auxiliary matrix size $17550 \times 17550$ for a successful decomposition.
For this tensor, NF method needs to set $\ell=5$ with the auxiliary matrix size $2310 \times 2310$.
Hence, both the computational complexity and the memory requirements of DDL and NF methods increase dramatically when the tensor rank and dimensions grow.
There are also homotopy methods \cite{Hauenstein2019,KUO2018homotopy}, which reformulate the tensor decomposition problem into polynomial systems and utilize homotopy techniques 
from numerical algebraic geometry to solve them. 
However, these methods are limited to small tensors due to high memory and computational costs. 
For more discussions on decomposition of symmetric tensors and Hermitian tensors, and the uniqueness of tensor decompositions, one may refer to \cite{Bernardi2013,BERNARDI2020,bhaskara2014smoothed,BRACHAT2010,Chiantini_2017,DomanovLathauwer2013,DomanovLathauwer2015,larsen2020practical,lovitz2021generalization,nie2015generating,nie2020hermitian,RHODES20101818}.

\subsection{Contributions}

To address the computational and memory limitations mentioned previously, 
we propose a new two-stage optimization algorithm for solving the tensor decomposition problem in the Middle-Rank Case. 
For the Low-Rank Case, our method is generally the same as the generating polynomial method given in \cite{NWZ23}.

Given a Middle-Rank Case tensor $\mathcal{F}\in \mathbb{C}^{n_1\times n_2\times n_3}$ with $n_1\ge n_2\ge n_3$ and rank $n_2 < r \le n_1$, 
our algorithm aims to find a tensor decomposition as in \eqref{sum_pure}. 
The high-level framework of our algorithm can be outlined as follows:

\smallskip

\begin{itemize}
    \item[Step~1.] Preprocessing the tensor $\mathcal{F}$  to get a reduced tensor $\mathcal{T}$ with its slices $T_2,\cdots,T_{n_3}$. (See Section~3.1.)

\smallskip
    \item[Step~2.] Try to find all the generalized left common eigenvectors of $T_2,\cdots,T_{n_3}$ by solving the first optimization problem.
    If all the generalized common eigenvectors are found, a linear least squares problem is applied to get the tensor decomposition; 
    Otherwise, go to Step~3. (See Section~3.2.)

\smallskip
    \item[Step~3.] Solve the second optimization problem as in \eqref{eq:2ndOp} with the incomplete set of generalized common eigenvectors obtained from Step~2. 
    Then, solve a linear least squares problem to get the tensor decomposition. (See Section~3.3.)
\end{itemize}

\smallskip
In the ideal case, our method could successfully find the tensor decomposition after Step~2.
Otherwise, the method will construct and solve a second optimization problem in Step~3 based on the generating polynomials of the reduced tensor $\mathcal{T}$ 
and the partial left common eigenvectors identified in Step~2. 
The tensor decomposition can then be obtained by solving a linear least squares problem.
More details on these steps will be explained in Section~\ref{two-stage-optimization}, Algorithms \ref{alg:findr1} and \ref{alg:findr1:step2}.

This paper is organized as follows. Section~\ref{sc:Preliminary} introduces the notation,
reviews the generating polynomials for tensors and presents some preliminary results.
In Section~\ref{two-stage-optimization}, we describe how to reformulate the tensor decomposition problem into the first and second optimization problems and provide our optimization-based algorithms.
Section~\ref{sc:example} presents numerical experiments by  comparing our algorithm with other widely used and state-of-the-art algorithms for tensor decomposition.

\section{Notation and Preliminary Results}
\label{sc:Preliminary}
\subsection*{Notation}
We use the symbol $\mathbb{R}$ (resp., $\mathbb{C}$) to
denote the set of real numbers (resp., complex numbers). 
Curl letters (e.g., $\mathcal{F}$) denote tensors, $\mathcal{F}_{i_1,\cdots,i_m}$ denotes the $(i_1,\cdots,i_m)$-th entry of the tensor $\mathcal{F}$.
Uppercase letters (e.g., $A$) denote matrices,
$A_{ij}$ denotes the $(i,j)$-th entry of the matrix $A$.
Lowercase letters (e.g., ${v}$) denote column vectors, ${v}_i$ is its $i$-th entry and 
$\mbox{diag}({v})$ denotes the square diagonal matrix whose diagonal entries are ${v}$.
for a vector $v$, ${v}_{s:t}$ denotes the subvector $(v_s,\ldots,v_t)^\top$.
For a matrix $A$, $A_{:,j}$ and $A_{i,:}$ denote its $j$-th column and $i$-th row, respectively.
Similar subscript notations are used for tensors.
For a complex matrix $A$, $A^\top$ denotes its transpose. 
The $\text{vec}(A)$ denotes the column vector obtained by sequentially stacking the columns of $A$ together.
The $\text{null}(A),\text{col}(A),\mathrm{row}(A)$ denote the null space, column space, and row space of $A$, respectively,
and $I_r$ is the $r$ by $r$ identity matrix.
We denote the Kronecker product by $\boxtimes$ and the outer product by $\otimes$. 
They are mathematically equivalent but have different output shapes. 
For matrices $A=\begin{pmatrix}
    {a}_1,{a}_2,\cdots,{a}_n
\end{pmatrix} \in \mathbb{C}^{m \times n} $ and $B=\begin{pmatrix}
    {b}_1,{b}_2,\cdots,{b}_n
\end{pmatrix}\in \mathbb{C}^{p \times n}$, the Kronecker product is
\begin{equation*}
A \boxtimes B=\begin{pmatrix}
    A_{1,1}B & \dots & A_{1,n}B\\
    \vdots & \ddots & \vdots\\
    A_{m,1}B & \dots & A_{m,n}B\\
\end{pmatrix}\in \mathbb{C}^{mp \times n^2},
\end{equation*}
and the reverse order Khatri-Rao product is
\begin{equation*}
A \odot B \coloneqq \begin{pmatrix}
    {b}_1 \boxtimes {a}_1, {b}_2 \boxtimes  {a}_2,\cdots, {b}_n \boxtimes  {a}_n
\end{pmatrix} \in \mathbb{C}^{mp \times n}.
\end{equation*}
For the tensor $\mathcal{F}$ with decomposition $\mathcal{F} = \sum_{i=1}^r {u}^{i,1}\otimes \cdots \otimes {u}^{i,m}$, we denote the decomposition matrices of $\mathcal{F}$ by 
\begin{align}
	U^{(j)}\coloneqq \begin{pmatrix}
	    {u}^{1,j},{u}^{2,j},\cdots,{u}^{r,j}
	\end{pmatrix}, ~~ j \in \{1,2,\cdots,m\}.
\end{align}
For convenience, we also write the decomposition as 
$$
\mathcal{F}=U^{(1)} \circ U^{(2)}\circ \cdots\circ U^{(m)}\coloneqq \sum_{j=1}^r U^{(1)}_{:,j} \otimes U^{(2)}_{:,j} \otimes\cdots\otimes U^{(m)}_{:,j}.
 $$
For a matrix $V \in \mathbb{C}^{p \times n_t}$ with $1 \leq t \leq m$,
the matrix-tensor product
\begin{equation*}
\mathcal{A}\coloneqq   V \times_t \mathcal{F}
\end{equation*}
is a tensor in
$\mathbb{C}^{n_1\times \cdots\times n_{t-1} \times p \times n_{t+1} \times \cdots\times n_m}$
such that the $i$-th slice of $\mathcal{A}$ is
\begin{equation*}
\mathcal{A}_{i_1,\cdots,i_{t-1},:,i_{t+1},\cdots,i_m} = V \mathcal{F}_{i_1,\cdots,i_{t-1},:,i_{t+1},\cdots,i_m}.
\end{equation*}
A property for the matrix tensor product is 
\begin{equation}\label{m-v-prod}
V \times_1 (U^{(1)} \circ U^{(2)}\circ \cdots\circ U^{(m)}) = (V U^{(1)}) \circ U^{(2)}\circ \cdots\circ U^{(m)}.
\end{equation}
For a tensor $\mathcal{F}\in \mathbb{C}^{n_1 \times n_2 \times n_3}$, let $1 \leq i_1 \leq n_1$, $1 \leq i_2 \leq n_2$, $1 \leq i_3 \leq n_3$ and 
\begin{equation*}
    j \coloneqq 1+\sum_{l=1,l\neq k}^{3}(i_l-1)J_l \mathrm{~~with}~~ J_l \coloneqq \prod_{p=1,p\neq k}^{l-1} n_p, 
\end{equation*} 
then, the mode-$k$ flattening is defined as 
\begin{equation*}
    M \coloneqq \mathrm{Flatten}(\mathcal{F},k)\in \mathbb{C}^{n_k \times \frac{n_1n_2n_3}{n_k}},~\mathrm{where~}M_{i_k,j}=\mathcal{F}_{i_1,i_2,i_3}.
\end{equation*}
Finally, we assume throughout the paper that
the tensor dimension $(n_1, n_2 ,n_3)$ is in descending order, i.e., $n_1\geq n_2 \geq n_3$.

\subsection{Generating Polynomial and Tensor Decomposition}
\label{sub:generatingPoly}
Generating polynomials are closely related to tensor decomposition. In this section, we review the definition of generating polynomials 
and the relations of the tensor decomposition to an optimization problem using generating polynomials.

For a tensor $\mathcal{F}\in\mathbb{C}^{n_1\times n_2\times \cdots \times n_m}$ with rank $r\le n_1$, we can index the tensor by monomials
\begin{align} \label{def:labelF}
\mathcal{F}_{x_{1,i_1},\ldots, x_{m,i_m}}\coloneqq \mathcal{F}_{i_1,\ldots,i_m}.
\end{align}
Consider a subset $I \subseteq \{1,2,\cdots,m \}$, we define
\begin{equation} 
\begin{array}{rcl}
I^c  & \coloneqq & \{1,2,\cdots,m \} \backslash I,  \\
 \mathbb{M}_{I} &\coloneqq & \{\Pi_{j\in I} x_{j,i_j}|1\le i_j \le n_j\}, \\
 \mathcal{M}_I & \coloneqq & {\rm span} \{\mathbb{M}_I\},\\
 \end{array}
\end{equation}
and the bi-linear operation
$\langle \cdot, \cdot \rangle$
between $\mathcal{M}_{\{1,2,\cdots,m\}}$ and $\mathbb{C}^{n_1 \times \ldots \times n_m}$ as
\begin{align}  	\label{polyten}
\langle ~\sum_{\mathclap{\scriptstyle \mu \in \mathbb{M}_{\{1,2,\cdots,m\}}}}~c_{\mu}\mu,\mathcal{F}~ \rangle~\coloneqq~\sum_{\mathclap{\scriptstyle \mu \in \mathbb{M}_{\{1,2,\cdots,m\}}}} ~c_{\mu}\mathcal{F}_{\mu},
\end{align}
where for monomial $\mu$,  $c_{\mu} \in \mathbb{C}$ is a scalar and $\mathcal{F}_\mu$ is the element of $\mathcal{F}$ labelled as in (\ref{def:labelF}).
By denoting,
\begin{align}
J\coloneqq  \{(i,j,k)|1\le i\le r, 2\le j\le m,2\le k\le n_j\},
\end{align}
we can have the following definitions on generating polynomials and generating matrices of a tensor $\mathcal{F}$ .
\begin{definition}[\cite{NieLR14,NWZ22}] \label{def:generatingmatrix}
For a subset $I \subseteq \{1,2,\cdots,m\}$ and a tensor
$\mathcal{F} \in \mathbb{C}^{n_1 \times \cdots \times n_m}$ with rank $r\le n_1$,
a polynomial $p \in \mathcal{M}_I$
is called a \textbf{generating polynomial} for $\mathcal{F}$ if
\begin{equation}  		\label{inner_p}
\langle pq,\mathcal{F} \rangle =0 \quad
\mbox{for all} \, \, q \in \mathbb{M}_{I^c} .
\end{equation}
Furthermore, the matrix $G \in \mathbb{C}^{r \times \lvert J \rvert }$ is called a \textbf{generating matrix} of $\mathcal{F}$ if the following equation
\begin{equation}  	\label{linear_eq_g}
    \sum_{\ell=1}^{r} G(\ell,\tau)\mathcal{F}_{x_{1,\ell}x_{j,1} \cdot \mu}
    =\mathcal{F}_{x_{1,i}  x_{j,k}  \cdot \mu} 
    \end{equation}
holds for all $\mu \in \mathbb{M}_{\{1,j\}^c}$ and $\tau=(i,j,k) \in J$.
\end{definition}

For $2\le j\le m$ and $2\le k \le n_j$, we define the $r$ by $r$ sub-matrix $M^{j,k}[G]$ of the generating matrix $G$ as
\begin{equation}\label{def:subgenerating}
M^{j,k}[G] \coloneqq \begin{pmatrix}
	G(1,(1,j,k)) & G(2,(1,j,k)) & \dots & G(r,(1,j,k)) \\
	G(1,(2,j,k)) & G(2,(2,j,k)) & \dots & G(r,(2,j,k)) \\
	\vdots & \vdots & \ddots & \vdots \\
	G(1,(r,j,k)) & G(2,(r,j,k)) & \dots & G(r,(r,j,k)) \\
\end{pmatrix},
\end{equation}
and the matrices
\begin{equation}  \label{def:A_b}
\begin{cases}
A[\mathcal{F},j]  &  \coloneqq  
\Big( \mathcal{F}_{x_{1,\ell}\cdot x_{j,1}\cdot \mu}
     \Big)_{\mu \in \mathbb{M}_{\{1,j\}^c}, 1\leq \ell \leq r} \in \mathbb{C}^{m_j^c \times r}, \\
 B[\mathcal{F},j,k] & \coloneqq  
     \Big(\mathcal{F}_{x_{1,\ell} \cdot x_{j,k}\cdot \mu}
      \Big)_{\mu \in \mathbb{M}_{\{1,j\}^c}, 1\leq \ell \leq r} \in \mathbb{C}^{m_j^c \times r},
\end{cases}
\end{equation}
where  $m^c_j=\frac{n_1 n_2 \dots n_m }{n_1 n_j}$.
Because $G$ is a generating matrix, by \eqref{linear_eq_g}, for all $2\le j\le m$ and $2\le k\le n_j$,
 those matrices follow the linear relation
\begin{equation} \label{linear_eq}
A[\mathcal{F},j] (M^{j,k}[G])^\top \, = \, B[\mathcal{F},j,k].
\end{equation}
Besides \eqref{linear_eq}, \zhc{the matrix} $M^{j,k}[G]$ also has the following property.
\begin{theorem}[\cite{NieLR14,NWZ22}]  \label{thm:decom_to_Mjk}
Suppose $\mathcal{F} =\sum_{i=1}^r {u}^{i,1} \otimes \cdots \otimes {u}^{i,m}$
for vectors ${u}^{i,j} \in \mathbb{C}^{n_j}$.
If $r \leq n_1$, ${ {u}^{i,2}_1 \cdots {u}^{i,m}_1 } \neq 0$ for $i=1,\cdots,r$,
and the first $r$ rows of the first decomposing matrix
\begin{equation*}
U^{(1)} \coloneqq \begin{pmatrix}
    {u}^{1,1} \, \, \cdots \,\,  {u}^{r,1} 
\end{pmatrix}
\end{equation*}
are linearly independent,
then there exists a generating matrix $G$ satisfying \eqref{linear_eq}
and for all $2\le j \le m$,
$2\le k \le n_j$ and $1\le i \le r$, it holds that
\begin{align}  	\label{Mjk[G]:eigeqn}
{ M^{j,k}[G]\cdot {u}^{i,1}_{1:r}=
{u}^{i,j}_k\cdot {u}^{i,1}_{1:r}. }
\end{align}
\end{theorem}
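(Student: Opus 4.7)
The plan is to construct the generating matrix $G$ explicitly from the decomposition data and then verify the two stated properties by direct calculation. The key observation is that after a preliminary rescaling of the factor vectors, both properties of $G$ follow from a single diagonalization identity involving the leading $r\times r$ block of $U^{(1)}$.

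First, I would exploit the scaling freedom in the CP decomposition. Since $u^{i,2}_1 \cdots u^{i,m}_1 \neq 0$, each $u^{i,j}_1$ with $j\geq 2$ is nonzero, so I can replace $u^{i,j}$ by $u^{i,j}/u^{i,j}_1$ for $j\geq 2$ and multiply $u^{i,1}$ by $\prod_{j\geq 2} u^{i,j}_1$ to compensate. This leaves $\mathcal{F}$ unchanged, only rescales the columns of $U^{(1)}$ (so its top $r$ rows remain linearly independent), and achieves $u^{i,j}_1 = 1$ for every $i$ and every $j\geq 2$.

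Next, let $\hat U^{(1)}\in\mathbb{C}^{r\times r}$ denote the top $r$ rows of $U^{(1)}$, which is now invertible. For each pair $(j,k)$ with $2\le j\le m$ and $2\le k\le n_j$, define
$$
M^{j,k}[G] \;:=\; \hat U^{(1)}\, D^{j,k}\, (\hat U^{(1)})^{-1},
\qquad
D^{j,k} := \mathrm{diag}\bigl(u^{1,j}_k,\ldots,u^{r,j}_k\bigr),
$$
and assemble $G$ from these blocks via $G(\ell,(i,j,k)) := (M^{j,k}[G])_{i,\ell}$, so that \eqref{def:subgenerating} holds by construction. The eigen-equation \eqref{Mjk[G]:eigeqn} is then immediate, since column $i$ of $\hat U^{(1)}$ is $u^{i,1}_{1:r}$ and $M^{j,k}[G]\,\hat U^{(1)} = \hat U^{(1)} D^{j,k}$ by design. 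For the generating-polynomial identity \eqref{linear_eq_g}, I would substitute the normalized CP decomposition into both sides. Writing $\mu(u^s) := \prod_{j'\notin\{1,j\}} u^{s,j'}_{\mu_{j'}}$ for $\mu\in\mathbb{M}_{\{1,j\}^c}$, the right-hand side becomes $\sum_s u^{s,1}_i u^{s,j}_k \mu(u^s)$, while the left-hand side becomes $\sum_s \mu(u^s)\,u^{s,j}_1 \sum_\ell G(\ell,(i,j,k))\, u^{s,1}_\ell = \sum_s \mu(u^s)\,(M^{j,k}[G]\,u^{s,1}_{1:r})_i$, which matches the right-hand side by the eigen-equation together with $u^{s,j}_1 = 1$.

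The main obstacle is conceptual rather than technical: the theorem pins the eigenvalue to $u^{i,j}_k$ rather than to the ratio $u^{i,j}_k/u^{i,j}_1$ that naturally emerges from the raw substitution, so without the normalization step the identity would not take the stated form. The hypothesis $u^{i,2}_1\cdots u^{i,m}_1 \neq 0$ is precisely what enables that normalization, while the linear independence of the first $r$ rows of $U^{(1)}$ provides the invertibility of $\hat U^{(1)}$ used in the explicit construction. Once the construction is in hand, the verification reduces to routine bookkeeping.
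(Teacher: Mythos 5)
Your construction is correct, and in fact the paper offers no proof of this statement at all --- Theorem~\ref{thm:decom_to_Mjk} is imported from \cite{NieLR14,NWZ22} as a known result --- so there is nothing in the paper to compare against. What you give is essentially the standard argument from those references: set $M^{j,k}[G]=\hat U^{(1)}D^{j,k}(\hat U^{(1)})^{-1}$ with $D^{j,k}=\mathrm{diag}(u^{1,j}_k,\ldots,u^{r,j}_k)$, read off \eqref{Mjk[G]:eigeqn} from the diagonalization, and verify \eqref{linear_eq_g} by substituting the rank-one terms. Your bookkeeping checks out: with $M^{j,k}[G]_{i,\ell}=G(\ell,(i,j,k))$ as in \eqref{def:subgenerating}, the left side of \eqref{linear_eq_g} collapses to $\sum_s \mu(u^s)\,u^{s,j}_1\,(M^{j,k}[G]u^{s,1}_{1:r})_i$, which matches $\sum_s u^{s,1}_i u^{s,j}_k\,\mu(u^s)$ once $u^{s,j}_1=1$.

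One point deserves emphasis. Your normalization step is not merely cosmetic but genuinely necessary: without $u^{i,j}_1=1$ the eigenvalue produced by \eqref{linear_eq_g} is the ratio $u^{i,j}_k/u^{i,j}_1$ (already visible for $r=1$, where $A[\mathcal{F},j]$ has full column rank and $M^{j,k}[G]$ is forced), so the identity \eqref{Mjk[G]:eigeqn} as literally printed holds only for the rescaled factors, not the original ones. The hypothesis $u^{i,2}_1\cdots u^{i,m}_1\neq 0$ exists precisely to make this rescaling possible, and the theorem should be read with that convention in force; you identified and resolved this correctly. The only cosmetic gap is that you do not explicitly remark that the rescaling preserves the hypothesis that the first $r$ rows of $U^{(1)}$ are linearly independent (it does, since it only multiplies columns by nonzero scalars), which you do mention in passing. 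No substantive gap.
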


For a generic tensor $\mathcal{F}$ with rank $r \leq n_1$, Theorem~\ref{thm:decom_to_Mjk} along with \cite[Theorem 4.1]{NWZ23} implies that
 there is an equivalence relation between a tensor decomposition and a generating matrix $G$ such that the $M^{j,k}[G]$'s, 
$2\le j\le m$ and $2\le k\le n_j$, are simultaneously diagonalizable. 
This characterizes how the generating matrices are related to the tensor decomposition.
In conclusion, when the tensor rank   $r \leq n_1$, the generating polynomials in the generic case construct a bijective relationship 
between the tensor decomposition and the $M^{j,k}$'s satisfying both \eqref{linear_eq} and \eqref{Mjk[G]:eigeqn}.

\subsection{Generating polynomial for order-3 tensor in Middle-Rank Case}\label{section:GPorder3}
For a tensor  $ \mathcal{F} \in \mathbb{C}^{n_1 \times n_2 \times n_3} $ in the Middle-Rank-Case, i.e., $n_2 < r \le n_1$,
if its decomposition is unique, \cite{Chiantini_AnAlgorithm_2014} shows that the tensor decomposition
can be generically recovered from the tensor decomposition of the sub-tensor $ \mathcal{F}_{1:r,:,:} $.
In practice, when rank $r < n_1$, the tensor decomposition can be also recovered from 
decomposing a smaller sized core tensor generated by some tucker tensor decomposition methods such as
 HOSVD \cite{Lathauwer2000} and HOID \cite{SaibabaHOID2016}. 
The following Lemma, which can be also obtained from the GEVD point of view,
 provides a simple proof showing that in the generic case, the tensor decomposition
of $ \mathcal{F}$ can be obtained by using the tensor decomposition of its sub-tensor
 $ \mathcal{F}_{1:r,:,:}$ without the uniqueness assumption.
\begin{lemma} \label{lemma:rton1}
Let $\mathcal{F}\in \mathbb{C}^{n_1 \times n_2 \times  n_3}$ be an order-3 tensor with
 rank $n_2 < r \le n_1$. 
 Suppose $\mathcal{F}_{1:r,:,:}=\hat{U}^{(1)} \circ \hat{U}^{(2)} \circ \hat{U}^{(3)}$
 and let  $A^1= \hat{U}^{(2)} \odot \hat{U}^{(3)}$ and $B^1=\mathrm{Flatten}(\mathcal{F},1)^\top$.
Then, in the generic case, the linear system  $A^1X=B^1$ has a least squares solution
 $\tilde{U}^{(1)}$ and  $\mathcal{F}=\tilde{U}^{(1)} \circ \hat{U}^{(2)}  \circ \hat{U}^{(3)}$.
\end{lemma}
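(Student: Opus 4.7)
The plan is to take any rank-$r$ CP representation $\mathcal{F} = U^{(1)} \circ U^{(2)} \circ U^{(3)}$ of the full tensor and compare the induced decomposition of $\mathcal{F}_{1:r,:,:}$ with the given decomposition $\hat{U}^{(1)} \circ \hat{U}^{(2)} \circ \hat{U}^{(3)}$. The generic hypothesis I would invoke is that the leading $r\times r$ block $U^{(1)}_{1:r,:}$ is invertible; this is a Zariski-open condition on the CP parameter space, and in the generic case some such representation of $\mathcal{F}$ is guaranteed to exist. Everything after that is purely algebraic manipulation of mode-$1$ flattenings.

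First I would take the mode-$1$ flattening of both representations of $\mathcal{F}_{1:r,:,:}$, obtaining
\begin{equation*}
U^{(1)}_{1:r,:}\,(U^{(2)} \odot U^{(3)})^\top \;=\; \hat{U}^{(1)}\,(\hat{U}^{(2)} \odot \hat{U}^{(3)})^\top.
\end{equation*}
Left-multiplying by $(U^{(1)}_{1:r,:})^{-1}$ writes every row of $(U^{(2)} \odot U^{(3)})^\top$ as a linear combination of rows of $(\hat{U}^{(2)} \odot \hat{U}^{(3)})^\top$, which is equivalent to the column-space inclusion $\mathrm{col}(U^{(2)} \odot U^{(3)}) \subseteq \mathrm{col}(A^1)$. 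Next, flattening the full tensor gives $\mathrm{Flatten}(\mathcal{F},1) = U^{(1)}\,(U^{(2)} \odot U^{(3)})^\top$, so
\begin{equation*}
B^1 \;=\; \mathrm{Flatten}(\mathcal{F},1)^\top \;=\; (U^{(2)} \odot U^{(3)})\,(U^{(1)})^\top,
\end{equation*}
and each column of $B^1$ lies in $\mathrm{col}(U^{(2)} \odot U^{(3)}) \subseteq \mathrm{col}(A^1)$. Consequently the linear system $A^1 X = B^1$ is consistent, so any least squares solution $X$ attains zero residual and is an exact solution.

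Setting $\tilde{U}^{(1)} \coloneqq X^\top$ and transposing the identity $A^1 \tilde{U}^{(1)\top} = B^1$ yields $\tilde{U}^{(1)}(\hat{U}^{(2)} \odot \hat{U}^{(3)})^\top = \mathrm{Flatten}(\mathcal{F},1)$, which is precisely the mode-$1$ flattening of $\tilde{U}^{(1)} \circ \hat{U}^{(2)} \circ \hat{U}^{(3)}$. Since flattening is a bijection on tensor entries, this gives $\mathcal{F} = \tilde{U}^{(1)} \circ \hat{U}^{(2)} \circ \hat{U}^{(3)}$, as claimed. The main (and essentially only) obstacle is justifying the generic invertibility of $U^{(1)}_{1:r,:}$: once one writes out what ``generic'' means on the parameter space of rank-$r$ decompositions, this becomes the standard observation that a generic $r\times r$ submatrix is nonsingular. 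All other ingredients are exact linear-algebra identities that do not require any further genericity.
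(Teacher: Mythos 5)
Your proof is correct and follows essentially the same route as the paper's: both arguments rest on the generic invertibility of $U^{(1)}_{1:r,:}$ for some rank-$r$ decomposition $\mathcal{F}=U^{(1)}\circ U^{(2)}\circ U^{(3)}$, and then reduce everything to linear algebra on mode-$1$ flattenings. The only cosmetic difference is that the paper exhibits the explicit solution $X=(W_1\hat{U}^{(1)})^\top$ with $W_1=U^{(1)}(U^{(1)}_{1:r,:})^{-1}$ via the matrix--tensor product identity, whereas you establish consistency of $A^1X=B^1$ through the column-space inclusion $\mathrm{col}(U^{(2)}\odot U^{(3)})\subseteq\mathrm{col}(A^1)$, which amounts to the same computation.
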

\begin{proof}
Since $\mathcal{F}$ has rank $n_2 < r \le n_1$, we have 
$\mathcal{F}={U}^{(1)} \circ {U}^{(2)} \circ {U}^{(3)}$ for some decomposition matrices 
${U}^{(j)} \in \mathbb{C}^{n_j \times r}$, $j = 1, 2, 3$, and in the generic case
${U}^{(1)}_{1:r,:}$ is nonsingular.
So, denoting $W_1\coloneqq  {U}^{(1)}({U}^{(1)}_{1:r,:})^{-1}$, we have from
\begin{equation*}
\mathcal{F}_{1:r,:,:}={U}^{(1)}_{1:r,:} \circ {U}^{(2)} \circ {U}^{(3)} =\hat{U}^{(1)} \circ \hat{U}^{(2)} \circ \hat{U}^{(3)},
\end{equation*}
property \eqref{m-v-prod} and $\mathcal{F}={U}^{(1)} \circ {U}^{(2)} \circ {U}^{(3)}$ that
 imply
\begin{equation*}
   \mathcal{F} =  W_1 \times_1  \mathcal{F}_{1:r,:,:}=W_1 \hat{U}^{(1)} \circ \hat{U}^{(2)} \circ \hat{U}^{(3)}. 
\end{equation*}
Hence, the linear system
\begin{equation}\label{eq:leastsquares}
    \left(\hat{U}^{(2)} \odot \hat{U}^{(3)}  \right) X= \mathrm{Flatten}(\mathcal{F},1)^\top
\end{equation}
has a solution $X= (W_1 \hat{U}^{(1)} )^\top$. 
Therefore,  \eqref{eq:leastsquares} has a least square solution
$\tilde{U}^{(1)}$ with zero residue. And, because of \eqref{eq:leastsquares},
we have 
$\mathcal{F}=\tilde{U}^{(1)} \circ \hat{U}^{(2)} \circ \hat{U}^{(3)}$.
\end{proof}

\section{Equivalent optimization reformulation and two-stage algorithm}
\label{two-stage-optimization}
In this section,
we derive equivalent optimization formulations of tensor decomposition 
and present our two-stage optimization algorithm. 
We begin by obtaining a reduced tensor $\mathcal{T}$ through preprocessing of a generic tensor $\mathcal{F}$.

\subsection{Preprocessing for reduced tensor $\mathcal{T}$}
For a generic tensor $\mathcal{F}\in \mathbb{C}^{n_1 \times n_2 \times n_3}$ with 
rank $n_2 < r \leq n_1$, suppose $\mathcal{F}$ has tensor decomposition 
 $\mathcal{F}=U^{(1)} \circ U^{(2)} \circ U^{(3)}$.
We would like to obtain a reduced tensor  $\mathcal{T}$ by preprocessing 
the tensor $\mathcal{F}$.
 
First, in the generic case, we would have $U^{(1)}_{1:r,:}$ is nonsingular, 
$U^{(2)}$ has full row rank and $U^{(3)}_{1,s} \ne 0$ for all $s=1, \ldots, r$.
For any $\lambda_s \ne 0$, $s=1, \ldots, r$, we have the observation 
\begin{equation*} U^{(1)} \circ U^{(2)} \circ U^{(3)}=U^{(1)} \circ (U^{(2)} \mathrm{diag}(\begin{pmatrix}
    \lambda_1,\cdots,\lambda_r
\end{pmatrix})) \circ (U^{(3)} \mathrm{diag}(\begin{pmatrix}
    1/\lambda_1,\cdots,1/\lambda_r
\end{pmatrix})).
\end{equation*}
Hence, without loss of generality, we can assume 
that $U^{(3)}_{1,s}=1$ for $s\in \{1,\cdots,r\}$. 
Therefore, we have
\begin{equation}\label{eq:f1fullrank}
\mathcal{F}_{1:r,:,1}=U^{(1)}_{1:r,:}\mathrm{diag}(U^{(3)}_{1,:})(U^{(2)})^\top=U^{(1)}_{1:r,:}(U^{(2)})^\top \in \mathbb{C}^{n_1 \times n_2},
\end{equation}
which has full column rank.
Therefore, there exists a matrix $C \in \mathbb{C}^{r \times (r-n_2)}$ such that 
\[
\hat{F}\coloneqq \begin{pmatrix}
    \mathcal{F}_{1:r,:,1} & C \\
\end{pmatrix}\in \mathbb{C}^{r \times r}
\]
is nonsingular.
So, denoting $P\coloneqq \hat{F}^{-1}$, we will have $P$ is nonsingular and 
\begin{equation}\label{eq:pre-processing}
    P \mathcal{F}_{1:r,:,1}=(I_r)_{:,1:n_2}.
\end{equation}
Now let the tensor $\mathcal{T}$ be obtained by matrix-tensor product 
on the tensor $\mathcal{F}_{1:r,:,:}$ as
\begin{align}\label{pre-processing}
    \mathcal{T}\coloneqq P \times_1 \mathcal{F}_{1:r,:,:}.
\end{align}
Then, from tensor decomposition of $\mathcal{F}$ and the property \eqref{m-v-prod},
we have 
\begin{equation}\label{T-decomp}
 \mathcal{T} = \hat{U}^{(1)} \circ U^{(2)} \circ U^{(3)}, 
\end{equation}
where $\hat{U}^{(1)}\coloneqq PU^{(1)}_{1:r,:}$.
Defining $T_k = \mathcal{T}_{:,:,k}$,  $1 \le k \le n_3$,
 it then follows from our construction that 
 $T_1=P\mathcal{F}_{1:r,:,1}=(I_r)_{:,1:n_2}$.
Our first-stage optimization algorithm relates to finding the generalized left common eigenvectors of $T_2, T_3, \ldots, T_{n_s}.$
(See Definition~\ref{def-gen-eigen}).
 
We now consider the linear systems \eqref{linear_eq} with $j=3$ and 
the tensor $\mathcal{F}$ being replaced by \zhc{the reduced tensor} $\mathcal{T}$.
Then, for all  $2 \le k \le n_3$,  denoting  $M^{3,k}[G]$ as $M^{3,k}$,
the linear systems in \eqref{linear_eq} with $j=3$ can be rewritten as
\begin{equation}\label{Mk-Tk}
    M^{3,k} T_1 = T_k.
\end{equation}
However, when  $r>n_2$, the matrices $\{ M^{3,k} \}_{k=2}^{n_3}$ cannot be fully determined by the linear systems \eqref{Mk-Tk}.
Fortunately, by Theorem~\ref{thm:decom_to_Mjk}, in addition to equations \eqref{Mk-Tk}, in the generic case
$ \{ M^{3,k} \}_{k=2}^{n_3}$ are simultaneously diagonalizable, and hence, must mutually commute, that is,
 for all $2 \le i < j \le n_3$, we have
\begin{equation} \label{eq:commute}
    M^{3,i} M^{3,j} = M^{3,j}M^{3,i}.
\end{equation}
Now, for all $2 \le k \le n_3$, by defining $P_k \coloneqq M^{3,k}_{:,n_2+1:r}$ and plugging in $T_1 = \mathcal{T}_{:,:,1}=(I_r)_{:,1:n_2}$ to \eqref{Mk-Tk},
we would have
\begin{align}\label{eq:def:P}
    M^{3,k} = \begin{pmatrix}
    T_k & P_k
\end{pmatrix}.
\end{align}
Since $\{T_k \}_{k=2}^{n_3}$ are known, finding $ \{ M^{3,k} \}_{k=2}^{n_3}$ turns out 
to be finding  $ \{ P_k \}_{k=2}^{n_3}$.
With \eqref{eq:def:P}, for all $2 \le i < j \le n_3$, the commuting equations \eqref{eq:commute}
can be rewritten as
\begin{eqnarray*}
  0  &=&M^{3,i}M^{3,j} - M^{3,j}M^{3,i}\\
    &=&  \begin{pmatrix} T_i & P_i \end{pmatrix} \begin{pmatrix} T_j & P_j \end{pmatrix} -
          \begin{pmatrix} T_j & P_j \end{pmatrix} \begin{pmatrix} T_i & P_i \end{pmatrix} .
\end{eqnarray*}
This gives the following linear and quadratic equations on unknowns $\{P_k\}_{k=2}^{n_3}$:
\begin{equation} \label{eq:linearT}
\begin{pmatrix}
    T_i & P_i\\
\end{pmatrix}T_j-
\begin{pmatrix}
    T_j & P_j\\
\end{pmatrix}T_i=0,
\end{equation}
and 
\begin{equation}\label{eq:quadT}
\begin{pmatrix}
    T_i & P_i\\
\end{pmatrix}P_j-
\begin{pmatrix}
    T_j & P_j\\
\end{pmatrix}P_i=0.
\end{equation}
We would use these linear equations \eqref{eq:linearT}
and nonlinear equations \eqref{eq:quadT} for designing our second-stage optimization 
algorithm.
\subsection{The first-stage optimization algorithm}
In this subsection, we propose the first reformulated optimization problem that is equivalent to the tensor decomposition problem for generic tensors with rank $r \leq n_1$.
We focus on the Middle-Rank Case with $n_2 < r \le n_1$.
For the Low-Rank Case with $r \leq n_2$, our method would essentially in spirit similar to
to the generalized eigenvalue decomposition (GEVD) method \cite{Leurgans1993,Sanchez1990}.

Recall the decomposition $\mathcal{T} = \hat{U}^{(1)} \circ U^{(2)} \circ U^{(3)}$
defined in \eqref{T-decomp}. 
Our first goal is to find the inverse of the first decomposition matrix $ \hat{U}^{(1)} $ 
of $\mathcal{T}$.
Denoting $S\coloneqq (\hat{U}^{(1)})^{-1}$ and recalling the definition of
 ${T}_{k}={\mathcal{T}}_{:,:,k}$, for all $k=1, \ldots, n_3$, we have 
\begin{align*}
    ST_k=(\hat{U}^{(1)})^{-1}\hat{U}^{(1)}\mathrm{diag}(U^{(3)}_{k,:})(U^{(2)})^\top=\mathrm{diag}(U^{(3)}_{k,:})(U^{(2)})^\top.
\end{align*} 
Then, it follows from  $T_1=(I_r)_{:,1:n_2} $ and our assumption
$\mathrm{diag}(U^{(3)}_{1,:}) = I_r$ that $S_{:,1:n_2} =  (U^{(2)})^\top$.
Moreover, denoting  $D_k\coloneqq \mathrm{diag}(U^{(3)}_{k,:})$,
we have
\begin{equation} \label{S-Tk}
S T_k= D_k (U^{(2)})^\top =D_kS_{:,1:n_2}
\end{equation}
 for all $k=1, \ldots, n_3$.
Note that $T_k$ in \eqref{S-Tk} is a $r$ by $n_2$ matrix instead of a square matrix.
So, the rows of $S$ can be considered as the generalized left eigenvectors of $T_k$.
Motivated by this observation, we propose the following definition of
the generalized left common eigenmatrix and eigenvectors.
\begin{definition} \label{def-gen-eigen}
    For a set of matrices $A_1,\cdots,A_d \in \mathbb{C}^{m \times n}$ with $m \geq n$, a full rank matrix $S \in \mathbb{C}^{m \times m}$ is called the generalized left common eigenmatrix of $A_1,\cdots,A_d$, if it satisfies
    \begin{equation} \label{eq:gceig}
            S A_k  = D_k S_{:,1:n} ~~\mathrm{for~}1\leq k \leq d, 
    \end{equation}
    where $D_k \in \mathbb{C}^{m \times m}$ is a diagonal matrix. 
    Then, for all $i =1, \ldots,m$, $s^i \coloneqq S_{i,:}$ is called a generalized left common eigenvector of 
$A_1,\cdots,A_d$, and $\lambda_{i,k} \coloneqq (D_k)_{i,i}$s is called the generalized left common eigenvalue of $A_k$ associated with $s^i$.
\end{definition}

Since $T_1=(I_r)_{:,1:n_2} $, $S T_1=S_{:,1:n_2}$ naturally holds for all $S$. 
So, our goal is to find the generalized left common eigenmatrix $S$ of the reduced tensor 
slices $T_2, T_3,\cdots,T_{n_3}\}$, i.e., find $S \in \mathbb{C}^{r \times r}$ and
$\lambda_{i,k} \in \mathbb{C}$ such that
\begin{equation*}
S_{i,:} T_k  = \lambda_{i,k} S_{i,1:n_2} ~~\mathrm{for~all~}1 \leq i\leq r \mbox{ and } 2 \leq k \leq n_3.
\end{equation*} 
Of course, under different scenarios, the generalized left common eigenmatrix and eigenvectors may not exist,
and even if it exists, it may not be unique. However, the following theorem shows that in 
the Middle-Rank Case, the generalized left common eigenmatrix of the reduced tensor slices has a bijective relationship with the tensor decomposition,
which can be utilized to find the tensor decomposition.
\begin{theorem} \label{thm:SequDecom}
Let $\mathcal{F}\in \mathbb{C}^{n_1 \times n_2 \times  n_3}$ be an order-3 tensor with
 rank $n_2 < r \le n_1$. 
 Suppose $\mathcal{T}$ is the reduced tensor of $\mathcal{F}$ given in \eqref{eq:pre-processing}
 with  ${T}_{k}={\mathcal{T}}_{:,:,k}$, $k=1, \ldots, n_3$.
In the generic case, we have
    \begin{enumerate}[(i)]
        \item for each nonsingular generalized left common eigenmatrix $S$ of $T_2,\cdots,T_{n_3}$,
         $\mathcal{F}$ has a tensor decomposition given in \eqref{eq:findTDbygceig};
        \item  for each tensor decomposition, there is a nonsingular generalized left 
        common eigenmatrix $S$ of $T_2,\cdots,T_{n_3}$.
    \end{enumerate}
\end{theorem}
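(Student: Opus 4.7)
The plan is to treat the two directions separately. Direction (ii) amounts to reading off an eigenmatrix from a given decomposition, while direction (i) requires reversing the preprocessing step to recover a full decomposition of $\mathcal{F}$ from the eigenmatrix.

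For (ii), I would start from any tensor decomposition $\mathcal{F} = U^{(1)} \circ U^{(2)} \circ U^{(3)}$. In the generic case $U^{(1)}_{1:r,:}$ is nonsingular, so by \eqref{T-decomp} the reduced tensor admits $\mathcal{T} = \hat{U}^{(1)} \circ U^{(2)} \circ U^{(3)}$ with $\hat{U}^{(1)} = P U^{(1)}_{1:r,:}$ nonsingular. Setting $S := (\hat{U}^{(1)})^{-1}$ and re-examining the derivation of \eqref{S-Tk} already done in the paper, I obtain $ST_k = D_k S_{:,1:n_2}$ with $D_k = \mathrm{diag}(U^{(3)}_{k,:})$ for every $k = 2, \ldots, n_3$, which is exactly the defining relation \eqref{eq:gceig} for a nonsingular generalized left common eigenmatrix.

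For (i), I would reverse the construction. Given a nonsingular $S$ satisfying $ST_k = D_k S_{:,1:n_2}$ for $k = 2, \ldots, n_3$, set $\hat{U}^{(1)} := S^{-1}$, $U^{(2)} := (S_{:,1:n_2})^\top$, and define $U^{(3)} \in \mathbb{C}^{n_3 \times r}$ whose first row is the all-ones vector and whose $k$-th row is the diagonal of $D_k$ for $k \ge 2$. A slice-by-slice verification using $T_1 = (I_r)_{:,1:n_2}$ and the eigenequation confirms $T_k = \hat{U}^{(1)} D_k (U^{(2)})^\top$ for all $k$, hence $\mathcal{T} = \hat{U}^{(1)} \circ U^{(2)} \circ U^{(3)}$. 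Inverting the preprocessing via property \eqref{m-v-prod} then yields $\mathcal{F}_{1:r,:,:} = (P^{-1}\hat{U}^{(1)}) \circ U^{(2)} \circ U^{(3)}$, and finally an application of Lemma~\ref{lemma:rton1} extends this sub-tensor decomposition to the full $\mathcal{F}$ through the associated least-squares solve, producing the formula referenced as \eqref{eq:findTDbygceig}.

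The main obstacle is the genericity bookkeeping in direction (i): one must verify that the triple produced is a genuine rank-$r$ CP decomposition and that the hypotheses of Lemma~\ref{lemma:rton1} are met, i.e., that $U^{(2)} \odot U^{(3)}$ has full column rank so the least-squares extension is exact with zero residue. I expect this to follow from the nonsingularity of $S$ combined with the same genericity used to build the preprocessing, namely that $(U^{(2)})^\top = S_{:,1:n_2}$ inherits full row rank $n_2$ and the diagonals of $D_2, \ldots, D_{n_3}$ collectively separate the $r$ rows of $S$. Pinning down exactly which generic conditions are inherited by $S$ and propagated through the reduction is the delicate bookkeeping portion of the argument, while the algebraic verifications themselves are straightforward once the construction is in place.
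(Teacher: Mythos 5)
Your proposal matches the paper's own proof essentially step for step: part (ii) by setting $S=(\hat{U}^{(1)})^{-1}$ and reading off the eigenrelation from \eqref{S-Tk}, and part (i) by reversing the construction via \eqref{eq:decombyS}, verifying $\hat{\mathcal{T}}=\mathcal{T}$ slice by slice, undoing the preprocessing with $P^{-1}\times_1$, and invoking Lemma~\ref{lemma:rton1} to extend to all of $\mathcal{F}$. The genericity caveats you flag are handled in the paper exactly as you anticipate, by absorbing them into the ``generic case'' hypothesis, so no further work is needed.
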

\begin{proof}
We first prove $(i)$. Suppose $S$ is a nonsingular generalized left common eigenmatrix of 
$T_2,\cdots,T_{n_3}$, that is 
\begin{equation}\label{S-Tk-1}
    S T_k = D_k S_{:,1:n_2}~~\mathrm{for~} 2\leq k \leq n_3. 
\end{equation}
Let $\lambda_{i,k}=(D_k)_{i,i}$ for all $1\leq i \leq r$ and $2\leq k \leq n_3$, and
let $\hat{\mathcal{T}} = \hat{U}^{(1)} \circ U^{(2)} \circ U^{(3)}$, where
\begin{align}\label{eq:decombyS}
    \hat{U}^{(1)} = S^{-1},~U^{(2)} = (S_{:,1:n_2})^\top \mbox{ and } U^{(3)} = \begin{pmatrix}
        1 & 1 & \dots & 1 \\
        \lambda_{1,2} & \lambda_{2,2} & \dots & \lambda_{r,2} \\
        \vdots  &\vdots  &\vdots  &\vdots  \\
         \lambda_{1,n_3} & \lambda_{2,n_3} & \dots & \lambda_{r,n_3} \\
    \end{pmatrix}.
\end{align}
Then, by the construction of $\hat{\mathcal{T}}$ and \eqref{S-Tk-1}, 
\begin{align*}
\hat{\mathcal{T}}_{:,:,k}=\hat{U}^{(1)}\mathrm{diag}(U^{(3)}_{k,:})(U^{(2)})^\top
=S^{-1}D_{k}S_{:,1:n_2} = S^{-1} (S T_k) =T_k.
\end{align*}
Hence, we have $\hat{\mathcal{T}}=\mathcal{T}$. 
Then, it follows from \eqref{eq:pre-processing} that
\begin{align*}
 \mathcal{F}_{1:r,:,:}=P^{-1}\times_1 \mathcal{T}= P^{-1}\times_1 \hat{\mathcal{T}} =
 P^{-1} \hat{U}^{(1)} \circ U^{(2)} \circ U^{(3)}.
\end{align*}
This gives a tensor decomposition for $\mathcal{F}_{1:r,:,:}$.
Then, by Lemma \ref{lemma:rton1}, the linear least squares system $AX=B$
has a solution, denoted as ${U}^{(1)}$, where $A=U^{(2)} \odot U^{(3)}$, 
$B=\mathrm{Flatten}(\mathcal{F},1)^\top$, and we have a tensor decomposition of 
$\mathcal{F}$ as 
\begin{align} \label{eq:findTDbygceig}
    \mathcal{F}={U}^{(1)} \circ U^{(2)} \circ U^{(3)}.
\end{align}

We now prove $(ii)$. 
This essentially follows from the previous discussion on the motivations of the Definition~\ref{def-gen-eigen}.
Since $\mathcal{T}$ is the reduced tensor of $\mathcal{F}$ given in \eqref{eq:pre-processing}, we have \eqref{T-decomp} holds. That is 
$\mathcal{T} = \hat{U}^{(1)} \circ U^{(2)} \circ U^{(3)}$, 
where $\hat{U}^{(1)} = PU^{(1)}_{1:r,:}$, $P$ is given in \eqref{eq:pre-processing}
and $ U^{(i)} $, $i=1,2,3$, are matrices such that 
$\mathcal{F} = U^{(1)} \circ U^{(2)} \circ U^{(3)}$.
Let $S = (\hat{U}^{(1)})^{-1}$.
Then, for all $1 \leq k \leq n_3$, we have
\begin{equation}\label{TkTkTk}
T_k= PU^{(1)}_{1:r,:} \text{diag}(U^{(3)}_{k,:})({U}^{(2)})^\top=S^{-1} \text{diag}(U^{(3)}_{k,:})({U}^{(2)})^\top.
\end{equation}
It then follows from  $T_1=(I_r)_{:,1:n_2} $,  
$\mathrm{diag}(U^{(3)}_{1,:}) = I_r$ and \eqref{TkTkTk} that $S_{:,1:n_2} =  (U^{(2)})^\top$.
Hence, by  \eqref{TkTkTk}, for $2 \leq k \leq n_3$ we have
\begin{align}
    S T_k = \text{diag}(U^{(3)}_{k,:})({U}^{(2)})^\top= \text{diag}(U^{(3)}_{k,:}) S_{:,1:n_2}.
\end{align}
Therefore, $S$ is a nonsingular generalized left common eigenmatrix of 
$T_2,\cdots,T_{n_3}$.
\end{proof}

Theorem~\ref{thm:SequDecom} shows that when the tensor rank belongs $n_2 < r \le n_1$,
 in the generic case, the reduced tensor slices would have a generalized left common eigenmatrix $S$,
 which can be used to construct the tensor decomposition.
In the following, we propose an optimization-based approach to find the rows of $S$ sequentially. 
First, given an unitary matrix $Q \in \mathbb{C}^{r \times r}$, 
for any $x \in \mathbb{C}^{r-1}$ denoting  $\bar{x}=Q\begin{pmatrix}  x^\top & 1 \\    \end{pmatrix}^\top$,
we define the function $f_{Q}(x)$ with domain
 $\Omega \coloneqq \{x \in \mathbb{C}^{r-1}: \bar{x}_{1:n_2} \ne 0 \}$  as 
\begin{align}
    f_{Q}(x) \coloneqq \text{Vec}\left( \big(I_{n_2}-\frac{\bar{x}_{1:n_2} \bar{x}^\top_{1:n_2}}{\bar{x}_{1:n_2}^\top \bar{x}_{1:n_2}} \big) 
\big(\bar{x}^\top \times_1 \mathcal{T} \big)\right).
\end{align}
Here, $ \bar{x}_{1:n_2} \bar{x}^\top_{1:n_2}/(\bar{x}_{1:n_2}^\top \bar{x}_{1:n_2})$ is a projection matrix that projects a vector $v \in \mathbb{C}^{n_2}$ into the column space of $\bar{x}_{1:n_2}$.
By the property of projection matrices, one can verify that $f_{Q}(x)=0$ if and only if 
all  $k =1, \ldots,n_3$, we have
\begin{equation}\label{fd-rowS}
 \big(\bar{x}^\top \times_1 \mathcal{T} \big)_{:,k}=\big(\mathcal{T}_{:,:,k} \big)^\top \bar{x}   =\lambda_{1,k}  \bar{x}_{1:n_2}
\end{equation} 
for some $\lambda_{1,k} \in \mathbb{C}$.
Additionally, denoting $Z= I_{n_2}-\bar{x}_{1:n_2} \bar{x}_{1:n_2}^\top/(\bar{x}_{1:n_2}^\top \bar{x}_{1:n_2})$ and $e_i$ be the $i$-th \zhc{coordinate basis in $\mathbb{C}^{n_2}$}, 
one can derive the Jacobian matrix of $f_{Q}(x)$: 
\[
    J_{f_{Q}} = \left[\mathrm{Flatten} \big(  ( \bar{x}^\top \times_1 \mathcal{T})^\top \times_2 \frac{\partial Z}{\partial \bar{x}},3 \big)+ 
\mathrm{Flatten} \big(Z \times_2 \mathcal{T},1 \big) \right]^\top \frac{\partial \bar{x}}{\partial x},
\]    
where
\begin{align*}
    \frac{\partial Z}{\partial \bar{x}_{1:n_2}}&=\frac{-\sum_{i=1}^{n_2}( e_i \otimes \bar{x}_{1:n_2} \otimes e_i+\bar{x}_{1:n_2} \otimes e_i \otimes e_i)}{\bar{x}_{1:n_2}^\top \bar{x}_{1:n_2}}+\frac{2\bar{x}_{1:n_2} \otimes \bar{x}_{1:n_2} \otimes \bar{x}_{1:n_2}}{(\bar{x}_{1:n_2}^\top \bar{x}_{1:n_2})^2},\\
    \frac{\partial Z}{\partial \bar{x}_{n_2+1:r}}&= 0 \quad \mathrm{~and~} \quad \frac{\partial \bar{x}}{\partial x}=Q_{:,1:r-1}.
\end{align*}
\begin{remark}
    For a tensor $\mathcal{F}= U^{(1)}\circ U^{(2)}\circ U^{(3)}$ with rank $r$, we know all the columns of $U^{(2)}$ are nonzero vectors. 
    Based on \eqref{eq:decombyS}, we have $S_{i,1:n_2}=U^{(2)}_{:,i}$. 
    Therefore, for each generalized left common eigenvector $s^i$ with $(s^i)_{1:n_2}=U^{(2)}_{:,i} \neq 0$, 
    the global minimizer such that $\bar{x}=s^i$ will be in $\Omega$.
    This implies that solving $f_{Q}(x)=0$ on $\Omega$ will be generically sufficient to find all the generalized left common eigenvectors 
    in $S$ of Theorem \ref{thm:SequDecom} $(ii)$.
\end{remark}

To find the first row of $S$, we start with  a randomly generated unitary matrix $Q_1 \in \mathbb{C}^{r \times r}$
and formulate the optimization problem:
\begin{align}\label{fun:fmin}
    \min_{x\in \Omega} ~\lVert f_{Q_1}(x)\rVert^2_2.
\end{align}
If we can find the global minimizer $x^1$ of \eqref{fun:fmin} such
that $ f_{Q_1}(x^1)=0$, we let
$(s^1)^\top \coloneqq \begin{pmatrix} (x^1)^\top & 1 \\ \end{pmatrix} Q_1^\top$ be the first row of $S$.
Now, suppose we have already found the first $p-1$ rows of $S$ for some $ 1 < p \le r$.  
Let $S^{p-1} \coloneqq (s^1, \ldots, s^{p-1})^\top$.
To determine the $p$-th row of $S$, which must be linearly independent to the first $p-1$ rows,
we perform the QR decomposition of $(S^{p-1})^\top$, i.e., we find
$(S^{p-1})^\top = Q_p R_p$ for some unitary matrix $Q_p \in \mathbb{C}^{r \times r}$
and $R_p \in \mathbb{C}^{r \times (p-1)}$ with $(R_p)_{1:p-1,:} $ being a nonsingular upper triangular matrix.
Then, we formulate the optimization problem: 
\begin{align} \label{fun:fmink}
    \min_{x\in \Omega} ~\lVert f_{Q_p}(x)\rVert^2_2.
\end{align}
If we can find the global minimizer $x^p$ of \eqref{fun:fmink} such
that $ f_{Q_p}(x^p)=0$, we let
$(s^p)^\top \coloneqq \begin{pmatrix} (x^p)^\top & 1 \\ \end{pmatrix} Q_p^\top$ be the first $p$-th row of $S$; otherwise, we stop the process.
Furthermore, the next lemma demonstrates that the $p$ rows sequentially generated by the above process are 
linearly independent. 
\begin{lemma} 
Given $1 < p \le r$, suppose $s^i$, $i=1, \ldots, p$ are obtained by 
setting $s^i=Q_{i}\begin{pmatrix}
        (x^i)^\top & 1 \\
    \end{pmatrix}^\top$, where $x^i$ is the minimizer of 
 $ \min_{x\in \Omega} ~\lVert f_{Q_i}(x)\rVert^2_2$
 and $Q_i$ is constructed using the above process.
Then, $s^p \notin \mathrm{span}(s^1,\cdots,s^{p-1})$.
\end{lemma}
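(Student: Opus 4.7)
The plan is to exploit the way $Q_p$ is built: because $Q_p$ comes from a QR factorization of $(S^{p-1})^\top$, the first $p-1$ columns of $Q_p$ already encode $\mathrm{span}(s^1,\ldots,s^{p-1})$, while the remaining columns are orthogonal complements. The definition of $s^p$ always loads the last column of $Q_p$ with coefficient $1$, and that is what prevents $s^p$ from lying in the span.

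More concretely, I would first record the implication of the QR decomposition $(S^{p-1})^\top = Q_p R_p$. By hypothesis, $(R_p)_{1:p-1,:}\in\mathbb{C}^{(p-1)\times(p-1)}$ is a nonsingular upper triangular matrix and the remaining rows of $R_p$ vanish, so
\[
(S^{p-1})^\top \;=\; (Q_p)_{:,1:p-1}\,(R_p)_{1:p-1,:}.
\]
Since $(R_p)_{1:p-1,:}$ is invertible, the column space of $(S^{p-1})^\top$ coincides with the column space of $(Q_p)_{:,1:p-1}$; that is,
\[
\mathrm{span}(s^1,\ldots,s^{p-1}) \;=\; \mathrm{span}\bigl((Q_p)_{:,1},\ldots,(Q_p)_{:,p-1}\bigr).
\]
(The inductive assumption that $s^1,\ldots,s^{p-1}$ are linearly independent, justifying the nonsingularity of $(R_p)_{1:p-1,:}$, is supplied by applying the lemma itself at the previous step, with the base case $p=2$ being immediate.)

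Next, I would expand $s^p$ in the orthonormal basis of $\mathbb{C}^r$ given by the columns of the unitary matrix $Q_p$. By the definition $s^p = Q_p\bigl((x^p)^\top,\,1\bigr)^\top$, we obtain the unique representation
\[
s^p \;=\; \sum_{i=1}^{r-1} x^p_i\,(Q_p)_{:,i} \;+\; 1\cdot (Q_p)_{:,r}.
\]
Now suppose, for contradiction, that $s^p\in\mathrm{span}(s^1,\ldots,s^{p-1})$. By the previous step this would force $s^p\in\mathrm{span}((Q_p)_{:,1},\ldots,(Q_p)_{:,p-1})$, and the uniqueness of the expansion in the $Q_p$-basis would then force every coefficient of $(Q_p)_{:,p},\ldots,(Q_p)_{:,r}$ to vanish. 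In particular the coefficient of $(Q_p)_{:,r}$ would be $0$, contradicting the fact that this coefficient equals $1$ by construction. Hence $s^p\notin\mathrm{span}(s^1,\ldots,s^{p-1})$.

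There is no real obstacle here; the only point that requires care is the identification of $\mathrm{span}(s^1,\ldots,s^{p-1})$ with $\mathrm{span}((Q_p)_{:,1},\ldots,(Q_p)_{:,p-1})$ via the structure of $R_p$, after which the conclusion follows from a one-line coefficient comparison in an orthonormal basis.
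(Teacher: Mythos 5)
Your proof is correct and takes essentially the same route as the paper's: identify $\mathrm{span}(s^1,\ldots,s^{p-1})$ with the column space of $(Q_p)_{:,1:p-1}$ via the QR factorization, then observe that the expansion of $s^p$ in the orthonormal columns of $Q_p$ carries the fixed coefficient $1$ on $(Q_p)_{:,r}$, so $s^p$ cannot lie in that span. Your version is slightly more explicit about the invertibility of $(R_p)_{1:p-1,:}$ and the uniqueness of the orthonormal expansion, but the argument is the same.
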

\begin{proof}
    As $Q_p R_p$ is the QR decomposition of $ S_{p-1}^\top =  (s^1, \ldots, s^{p-1})^\top$, 
    we have
    \begin{equation}\label{Qk-k-1}
        \mathrm{col}((Q_{p})_{:,1:p-1})=\mathrm{span}(s^1, s^2, \dots ,s^{p-1}).
    \end{equation}
  On the other hand, by the process of obtaining $s^k$, we have
    \begin{equation*}
    s^p=Q_{k}\begin{pmatrix}
        (x^p)^\top & 1 \\
    \end{pmatrix}^\top=(Q_{p})_{:,1:p-1} (x^p)_{1:p-1}+(Q_{p})_{:,p:r} \begin{pmatrix}
        (x^p)_{p:r-1} \\
        1 \\
    \end{pmatrix}.
    \end{equation*}
    Then, because $Q_p \in \mathbb{C}^{r \times r} $ is an unitary matrix, \eqref{Qk-k-1}  
    and $\begin{pmatrix}
        (x^p))_{p:r-1}^\top & 1 \\
       \end{pmatrix} \ne 0$,
     we have $s^p \notin \mathrm{span}(s^1,\cdots,s^{p-1})$.
\end{proof}

By Theorem \ref{thm:SequDecom}, if we are able to determine all the $r$ rows of the matrix $S$, i.e. find the entire matrix $S$,
a tensor decomposition for $\mathcal{F}$ can be obtained as in \eqref{eq:findTDbygceig}. 
 In this case, the algorithm for finding
 the tensor decomposition can be described as Algorithm~\ref{alg:findr1}.
However, if only the first $p$ rows $S^p$ of $S$ with $p < r$ are computed,  we cannot 
fully determine the tensor decomposition using this partial information. 
Nonetheless, the relation \eqref{eq:lineareig} involving $S^p$ will still be utilized in the second-stage optimization algorithm presented in the next section.
 
\smallskip 
\begin{alg}
\label{alg:findr1}
The First-Stage Algorithm for Tensor Decomposition
\smallskip

\noindent \textbf{Input:} The tensor $\mathcal{F}$ with rank $n_2 < r \leq n_1$.

\vskip 2pt 
\begin{itemize}

\item [Step~1]
Preprocess the tensor $\mathcal{F}$ and get the new tensor $\mathcal{T}$ as in \eqref{pre-processing}.
\smallskip

\item [Step~2]
For $k = 1, \ldots, r$, solve the optimization $ \min_{x\in \Omega} ~\lVert f_{Q_k}(x)\rVert^2_2$
sequentially and obtain $s^k$ as described in the above process.
If all the $r$ optimization problems are successfully solved, form 
the generalized left common eigenmatrix $S = (s^1, \ldots, s^r)^\top$ and continue Step~3;
otherwise, if only $p < r$ optimization problems are solved, 
form partial left common eigenmatrix $S^p = (s^1, \ldots, s^p)^\top$ and stop the algorithm.

\smallskip
\item [Step~3]
For $i = 1, \ldots, r$ and $k = 1, \ldots, n_3$,
let $\lambda_{i,k}$ be the generalized common eigenvalue of $T_k$ associated with $s^i$, 
${w}^{i,2}\coloneqq (S_{i,1:n_2})^\top $ and $w^{i,3}_k\coloneqq \lambda_{i,k}$.

\smallskip
\item [Step~4] Solve the linear system
$
    \sum_{i=1}^r  {{w}}^{i,1} \otimes {w}^{i,2} \otimes {w}^{i,3} = \mathcal{F} 
$
to get vectors $\{{w}^{i,1}\}$. 

\end{itemize}
\vskip 2pt 
\textbf{Output:} A decomposition of $\mathcal{F}$:
$  \quad \mathcal{F} = \sum_{i=1}^r {w}^{i,1} \otimes {w}^{i,2} \otimes {w}^{i,3} $
\end{alg}
\smallskip

The above process for finding tensor decomposition can in fact also be 
analogously applied for tensors $\mathcal{F}$ with rank $r \le n_2$. 
Suppose $\mathcal{F}$ has a decomposition given in \eqref{eq:findTDbygceig}.
In this case, the reduced tensor will be
 $\mathcal{T} =  P\times_1 \mathcal{F}_{1:r,1:r,:}$, where 
\begin{equation} \label{eq:pre-processing:r<n_2}
    P= \big(\mathcal{F}_{1:r,1:r,1} \big)^{-1}= \big(((U^{(2)}_{1:r,:})^\top \big)^{-1} \big((U^{(1)}_{1:r,:} \big)^{-1}.
\end{equation}
Thus,  $\mathcal{T}$ will have decomposition 
$ \mathcal{T} = \hat{U}^{(1)} \circ  U^{(2)}_{1:r,:} \circ U^{(3)}$, where
$\hat{U}^{(1)} = PU^{(1)}_{1:r,:}=((U^{(2)}_{1:r,:})^{-1})^\top $.
Then, for $k = 2, \ldots, n_3$, we would have 
\begin{equation*}
T_k= \big(((U^{(2)}_{1:r,:})^{-1} \big)^\top \mathrm{diag} \big(U^{(3)}_{k,:} \big) \big((U^{(2)}_{1:r,:} \big)^\top.
\end{equation*} 
In this case, the matrix $S$, as the common left eigenmatrix of $\{T_k\}_{k=2}^{n_3}$,
  is  $(U^{(2)}_{1:r,:})^\top$ and its rows reduce to the standard left common eigenvectors.
So, $S$ can be found by determining the left eigenvectors of $T_k$, for instance, using the power method.
Finally, the tensor decomposition of $F$ can be obtained by solving particular linear systems
analogous to Step 4 of Algorithm~\ref{alg:findr1}.
This approach is similar to the generalized eigenvalue decomposition (GEVD) method \cite{Leurgans1993,Sanchez1990}.
 
%
%
%
%
%

\subsection{The second-stage optimization algorithm}
\label{sc:secondOptimization}
In this section, we consider the scenarios where, 
instead of the entire generalized left common eigenmatrix $S$, only 
partial rows of $S$ are obtained by the first-stage Algorithm~\eqref{eq:findTDbygceig}.
Recall that $\mathcal{T}$ and $T_i \in \mathbb{C}^{r \times n_2}$ represent the tensor and its matrix slices
produced by preprocessing the original tensor $\mathcal{F}$ as in \eqref{pre-processing}. 
In this case, to find the tensor decomposition of $\mathcal{F}$, we reformulate the problem 
of solving $\{P_k\}_{k=2}^{n_3}$ by using the linear and quadratic equations \eqref{eq:linearT} and \eqref{eq:quadT}, respectively.
In what follows, for $k = 2, \ldots, n_3$, we denote
\[
T_k = \begin{pmatrix}  (T_k^1)^\top \\ (T_k^2)^\top \end{pmatrix}, \quad
 \mbox{where } T_k^1 \in \mathbb{C}^{n_2 \times n_2 } \mbox{ and } T_k^2 \in \mathbb{C}^{n_2 \times (r-n_2) }.
\]
Therefore, from \eqref{eq:linearT} we get
\[
T_i(T_j^1)^\top + P_i(T_j^2)^\top =
T_j(T_i^1)^\top  + P_j(T_i^2)^\top,
\]
which then implies
\begin{align} \label{eq:lin expand}
P_i(T_j^2)^\top - P_j(T_i^2)^\top = T_j(T_i^1)^\top-T_i(T_j^1)^\top.
\end{align}
There are a total of $ {n_3-1 \choose 2}$ choices for the pair $(i,j)$
in \eqref{eq:lin expand}. Let
\[
d_1=rn_2(n_3-1)(n_3-2)/2 \quad \mbox{ and } \quad d_2=r(r-n_2)(n_3-1) .
\]
We can reformulate \eqref{eq:lin expand} as a linear system 
\begin{equation}\label{eq:2ndOplinear1}
    A \begin{pmatrix}
    \mathrm{vec}(P_2)^\top &
    \cdots &
    \mathrm{vec}(P_{n_3})^\top \\
\end{pmatrix}^\top=b,
\end{equation}
where the coefficient matrix
\begin{equation}  \label{commute_linear_system}
    A = \begin{pmatrix}
        T^2_3 \boxtimes I_{r} & -T^2_2 \boxtimes I_{r}& 0 & \cdots & 0\\
        T^2_4\boxtimes I_{r} & 0 & -T^2_2\boxtimes I_{r} & \cdots & 0\\
        \vdots &\vdots &\vdots &  \ddots & \vdots \\
       { T^2_{n_3}}\boxtimes I_{r} & 0 & 0 & \cdots & -T^2_2\boxtimes I_{r}\\
        0 & T^2_4 \boxtimes I_{r}& -T^2_3 \boxtimes I_{r}& \cdots & 0\\
        \vdots &\vdots &\vdots &  \ddots & \vdots \\
       0 & \cdots & 0 &  T^2_{n_3} \boxtimes I_{r} &  -T^2_{n_3-1}  \boxtimes I_{r}\\
    \end{pmatrix}  \in \mathbb{C}^{ d_1\times d_2 },
\end{equation}
and the right hand side 
\begin{equation}
    b = \begin{pmatrix}
        \mathrm{vec}(T_2^1(T_3)^\top-T_3^1(T_2)^\top) \\
        \vdots \\
        \mathrm{vec}(T_{n_3-1}^1(T_{n_3})^\top-T_{n_3}^1(T_{n_3-1})^\top)\\
    \end{pmatrix}.
\end{equation}

Given any $p$ rows of the generalized left common eigenmatrix $S$ of
$\{T_k\}_{k=2}^{n_3}$, which are  the slices of the 
reduced tensor $\mathcal{T}$, the following theorem provides an important property for
 designing our second-stage optimization.
\begin{theorem} \label{thm:opt1opt2}
Let $\mathcal{F}\in \mathbb{C}^{n_1 \times n_2 \times  n_3}$ be an order-3 tensor with rank $n_2 < r \le n_1$.
Suppose $s^1,\cdots,s^p$ are linearly independent rows of the generalized left common eigenmatrix $S$ 
of the reduced tensor slices $\{T_k\}_{k=2}^{n_3} $ of $\mathcal{F}$.
Let $S^p=(s^1,\cdots,s^p)^\top$ and $P_k = M^{3,k}_{:,n_2+1:r}$, where $M^{3,k}$ is the generating matrix defined in  \eqref{def:subgenerating}.
We have
    \begin{align}\label{eq:lineareig}
    S^p\begin{pmatrix}
        T_k & P_k \\
    \end{pmatrix} = D_k S^p  ~~~\mathrm{for}~2\leq k \leq n_3,
\end{align}
where $D_k$ is a diagonal matrix. 
\end{theorem}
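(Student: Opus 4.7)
The plan is to identify the given $S$ with the inverse of the first decomposition matrix of the reduced tensor and then invoke Theorem~\ref{thm:decom_to_Mjk} to upgrade the defining relation of Definition~\ref{def-gen-eigen}, which only concerns the slices $T_k$, into a full left-eigenvector relation for the square generating matrix $M^{3,k} = (T_k, P_k)$. Concretely, by Theorem~\ref{thm:SequDecom}(i), a nonsingular generalized left common eigenmatrix $S$ of $\{T_k\}_{k=2}^{n_3}$ produces a decomposition $\mathcal{T} = \hat{U}^{(1)} \circ U^{(2)} \circ U^{(3)}$ via \eqref{eq:decombyS} in which $\hat{U}^{(1)} = S^{-1}$, so the first step is simply to record this identification.

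Next, I would apply Theorem~\ref{thm:decom_to_Mjk} with $j=3$ to this decomposition of $\mathcal{T}$. Since $\hat{U}^{(1)}$ is $r \times r$ and nonsingular, and the remaining genericity hypotheses are inherited from the preprocessing of $\mathcal{F}$, the theorem yields a generating matrix $G$ whose slice satisfies
\begin{equation*}
M^{3,k}[G]\,\hat{U}^{(1)} = \hat{U}^{(1)}\,\Lambda_k, \qquad \Lambda_k = \mathrm{diag}(U^{(3)}_{k,:}),
\end{equation*}
for all $2 \le k \le n_3$. Left-multiplying by $S = (\hat{U}^{(1)})^{-1}$ promotes this to $S\,M^{3,k} = \Lambda_k\,S$, so every row $s^i$ of $S$ is a genuine left eigenvector of $M^{3,k}$, not merely a vector satisfying the weaker relation $s^i T_k = \lambda_{i,k}\,s^i_{1:n_2}$ of Definition~\ref{def-gen-eigen}. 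Substituting the block form $M^{3,k} = \begin{pmatrix} T_k & P_k \end{pmatrix}$ from \eqref{eq:def:P} and keeping only the rows $s^1,\ldots,s^p$ stacked into $S^p$ then yields
\begin{equation*}
S^p \begin{pmatrix} T_k & P_k \end{pmatrix} = D_k\,S^p,
\end{equation*}
with $D_k$ the diagonal submatrix of $\Lambda_k$ indexed by those chosen rows, which is exactly \eqref{eq:lineareig}.

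The main obstacle is conceptual rather than computational: Definition~\ref{def-gen-eigen} only describes how $S$ interacts with the $r \times n_2$ slices $T_k$, whereas the conclusion requires a left-eigenvector relation involving the full square matrix $M^{3,k} = (T_k, P_k)$, whose right-hand block $P_k$ is \emph{not} part of the defining relation for $S$. Bridging this gap is precisely what the combination of Theorem~\ref{thm:SequDecom}(i) and Theorem~\ref{thm:decom_to_Mjk} accomplishes; once that bridge is in place the remainder is routine block-matrix bookkeeping.
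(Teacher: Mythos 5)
Your proposal is correct and follows essentially the same route as the paper: both identify $S=(\hat{U}^{(1)})^{-1}$ via the decomposition \eqref{eq:decombyS} from Theorem~\ref{thm:SequDecom}(i), invoke \eqref{Mjk[G]:eigeqn} of Theorem~\ref{thm:decom_to_Mjk} to write $M^{3,k}=\hat{U}^{(1)}\mathrm{diag}(U^{(3)}_{k,:})(\hat{U}^{(1)})^{-1}$, and then read off $S^p M^{3,k}=D_kS^p$ by restricting rows. The only cosmetic difference is that the paper verifies the $T_k$ block and the $P_k$ block separately, whereas you obtain the full relation $S M^{3,k}=\Lambda_k S$ in one step and then split it.
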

\begin{proof}
    Without loss of generality, let us assume $S^p = S_{1:p,:}$, where
    $S$ is the generalized left common eigenmatrix of the slices $\{T_k\}_{k=2}^{n_3}$ of 
the reduced tensor $\mathcal{T}$.
Therefore, the proof of part (i) of Theorem~\ref{thm:SequDecom} implies that
$\mathcal{T}$ has the tensor decomposition $\mathcal{T} = \hat{U}^{(1)} \circ U^{(2)} \circ U^{(3)}$,
where $\hat{U}^{(1)}$, $U^{(2)}$ and $U^{(3)}$ are given in \eqref{eq:decombyS}.
Then, by \eqref{S-Tk-1}, we have 
    \begin{equation} \label{proof:eq:STfirst}
        S^ pT_k=D_k S^p_{:,1:n_2}~\mathrm{where}~D_k = U^{(3)}_{k,1:p}.
    \end{equation}
    We can rewrite \eqref{Mjk[G]:eigeqn} of Theorem~\ref{thm:decom_to_Mjk} as
    \begin{equation} \label{M3k}
        M^{3,k}=\hat{U}^{(1)} \mathrm{diag}(U^{(3)}_{k,:})(\hat{U}^{(1)})^{-1}.
    \end{equation} 
   From \eqref{eq:decombyS}, we have $S=(\hat{U}^{(1)})^{-1}$.
   Hence, it follows from   $P_k = M^{3,k}_{:,n_2+1:r}$, \eqref{proof:eq:STfirst} and \eqref{M3k}  that 
 \begin{align}\label{proof:eq:STsecond}
    \begin{aligned}
        S^p P_k~= & ~ S_{1:p,:}  M^{3,k}_{:,n_2+1:r} \\
         ~=&~((\hat{U}^{(1)})^{-1})_{1:p,:} \hat{U}^{(1)} \mathrm{diag}(U^{(3)}_{k,:})(\hat{U}^{(1)})^{-1} (I_{r})_{:,n_2+1:r}\\
        ~=&~\mathrm{diag}(U^{(3)}_{k,1:p})((\hat{U}^{(1)})^{-1})_{1:p,n_2+1:r} \\
        ~=&~D_k S_{1:p,n_2+1:r} = D_k S^p_{:,n_2+1:r}.
    \end{aligned}
    \end{align}
 Finally,  combining \eqref{proof:eq:STfirst} and \eqref{proof:eq:STsecond}, we have \eqref{eq:lineareig} holds.
\end{proof}

From \eqref{eq:lineareig} of Theorem~\ref{thm:opt1opt2}, we have
 $S^p P_k = D_k (S^p)_{:,n_2+1:r}$, which provides additional system
  of linear equations for unknowns $\{P_k\}_{k=2}^{n_3}$.
These linear systems can be compactly written as 
\begin{equation*}
    \tilde{A} \begin{pmatrix}
    \mathrm{vec}(P_2)^\top &
    \cdots &
    \mathrm{vec}(P_{n_3})^\top \\
\end{pmatrix}^\top=\tilde{b}
\end{equation*}
where the coefficient matrix 
\begin{equation}  \label{commute_linear_system2}
    \tilde{A} = \begin{pmatrix}
        I_{r-n_2} \boxtimes S^p &0 & \cdots & 0\\
        0 & I_{r-n_2} \boxtimes S^p  & \cdots & 0\\
        \vdots & \vdots &  \ddots  & \vdots \\
         0 & \cdots & 0 & I_{r-n_2} \boxtimes S^p \\
    \end{pmatrix}  \in \mathbb{C}^{ (r-n_2)(n_3-1)p\times d_2 },
\end{equation}
and the right hand side
\begin{equation}
    \tilde{b} = \begin{pmatrix}
        \mathrm{vec}(D_2 (S^p)_{:,n_2+1:r}) \\
        \vdots \\
        \mathrm{vec}(D_{n_3} (S^p)_{:,n_2+1:r})\\
    \end{pmatrix}.
\end{equation}
Combining it with the previous linear system \eqref{eq:2ndOplinear1}, we can form a 
larger linear system for $\{P_k\}_{k=2}^{n_3}$ as
\begin{equation} \label{eq:finallinear}
     \hat{A} \begin{pmatrix}
    \mathrm{vec}(P_2)^\top &
    \cdots &
    \mathrm{vec}(P_{n_3})^\top\\
\end{pmatrix}^\top=\hat{b},
\end{equation}
where $\hat{A}=\begin{pmatrix}
        A^\top & \tilde{A}^\top 
    \end{pmatrix}^\top$ and $\hat{b}=\begin{pmatrix}
    b^\top & \tilde{b}^\top 
\end{pmatrix}^\top$.
In addition to linear system \eqref{eq:finallinear},
$\{P_k\}_{k=2}^{n_3}$ also satisfy the quadratic equations \eqref{eq:quadT}.
So, to find $\{P_k\}_{k=2}^{n_3}$,
 by denoting
\begin{equation*}
    g^{i,j}(P_i, P_{j}) \coloneqq \text{vec} \big(\begin{pmatrix}
    T_i & P_i\\
\end{pmatrix}P_j-
\begin{pmatrix}
    T_j & P_j\\
\end{pmatrix}P_i \big)
\end{equation*}  
 for $2 \leq i < j \leq n_3$,  we propose to solve the following optimization problem:
\begin{eqnarray}\label{eq:2ndOp}
        \min_{P_2,\cdots,P_{n_3} \in \mathbb{C}^{r \times (r-n_2)}} ~~~& &
        \frac{1}{2} \ \sum_{2 \leq i < j \leq n_3} \|  g^{i,j}(P_i, P_{j}) \|^2 \nonumber \\
\mbox{subject to} \qquad\quad\quad & & \hat{A} \begin{pmatrix}
    \mathrm{vec}(P_2)^\top 
    \cdots &
    \mathrm{vec}(P_{n_3})^\top \\
\end{pmatrix}^\top=\hat{b}.
\end{eqnarray}

Let $N\in \mathbb{C}^{d_2 \times d}$ be a matrix whose columns form a basis for the null space of $\hat{A}$ in \eqref{eq:finallinear}
and $\begin{pmatrix}
    \mathrm{vec}(P_2^0)^\top &
    \cdots &
    \mathrm{vec}(P_{n_3}^0)^\top \\
\end{pmatrix}^\top $ be a particular solution of \eqref{eq:finallinear}. 
Then, for $2\leq k \leq n_3$, we can parametrize the unknowns $P_k$ in  \eqref{eq:finallinear}
by $x\in \mathbb{C}^{d}$ as
\begin{equation} \label{eq:Pkx}
    \mathrm{vec}(P_k(x))=\mathrm{vec}(P^0_k)+N_kx,
\end{equation} 
where $N_k \coloneqq N_{(k-2)r(r-n_2)+1:(k-1)r(r-n_2),:}$.
So, by denoting
\begin{equation} \label{def:gx}
    g(x) \coloneqq \begin{pmatrix}
        (g^{2,3}(P_2(x),P_3(x)))^\top & \cdots & (g^{n_3-1,n_3}(P_{n_3-1}(x),P_{n_3}(x)))^\top
    \end{pmatrix},
\end{equation}
 the constrained nonlinear optimization problem \eqref{eq:2ndOp} is equivalent to
 the following unconstrained nonlinear least squares optimization:
\begin{equation}\label{eq:2ndOpx}
        \min_{x \in \mathbb{C}^{d}} ~~~\frac{1}{2}  \ \sum_{2 \leq i < j \leq n_3} 
        \|  g^{i,j}(P_i(x), P_{j}(x)) \|^2 = \frac{1}{2} \| g(x) \|^2_2.
\end{equation} 
By Theorem~\ref{thm:decom_to_Mjk}, for a generic order-3 tensor $\mathcal{F}$,
the generating matrices $\{M^{3,k}\}_{k=2}^{n_3}$ satisfy commuting equations
\eqref{eq:commute}, which is equivalent to equations
 \eqref{eq:linearT} and \eqref{eq:quadT}. 
Hence, by Theorem~\ref{thm:opt1opt2} and our construction, in the generic case,
the constrained optimization problem \eqref{eq:2ndOp} has a nonempty feasible set and
the unconstrained nonlinear least squares optimization \eqref{eq:2ndOpx}
has a global minimizer $x^*$ such that $g(x^*)=0$.

To facilitate solving the nonlinear least squares optimization \eqref{eq:2ndOpx},
 we can derive the Jacobian of the function $g(x)$ defined in \eqref{def:gx}. 
 Let $P_k=\begin{pmatrix}
    P_k^1 \\
    P_k^2 
\end{pmatrix}\in \mathbb{C}^{r \times (r-n_2)}$, where $P_k^1 \in \mathbb{C}^{n_2 \times (r-n_2)}$.
The Jacobian of the function $g$ can be given as 
\begin{equation} \label{def:Jg}
    J_g \coloneqq \begin{pmatrix}
        \frac{\partial g^{2,3}}{\partial \mathrm{vec}(P_2)}& \frac{\partial g^{2,3}}{\partial \mathrm{vec}(P_3)} & 0 & \cdots & 0\\
        \frac{\partial g^{2,4}}{\partial \mathrm{vec}(P_2)} & 0 & \frac{\partial g^{2,4}}{\partial \mathrm{vec}(P_4)}& \cdots & 0\\
        \vdots &\vdots &\vdots &  \ddots & \vdots \\
       \frac{\partial g^{2,n_3}}{\partial \mathrm{vec}(P_2)}  & 0 & 0 & \cdots & \frac{\partial g^{2,n_3}}{\partial \mathrm{vec}(P_{n_3})} \\
        0 & \frac{\partial g^{3,4}}{\partial \mathrm{vec}(P_3)} & \frac{\partial g^{3,4}}{\partial \mathrm{vec}(P_4)} & \cdots & 0\\
        \vdots &\vdots &\vdots &  \ddots & \vdots \\
       0 & \cdots & 0 &  \frac{\partial g^{n_3-1,n_3}}{\partial \mathrm{vec}(P_{n_3-1})} & \frac{\partial g^{n_3-1,n_3}}{\partial \mathrm{vec}(P_{n_3})}\\
    \end{pmatrix} \renewcommand\arraystretch{2}\begin{pmatrix}
        \frac{\partial \mathrm{vec}(P_2)}{\partial x} \\
        \frac{\partial \mathrm{vec}(P_3)}{\partial x} \\
        \vdots \\
        \frac{\partial \mathrm{vec}(P_{n_3})}{\partial x} \\
    \end{pmatrix},
\end{equation}
where 
\begin{align}
\begin{cases}
    \frac{\partial g^{i,j}}{\partial \mathrm{vec}(P_{i})} = -I_{r-n_2} \boxtimes \begin{pmatrix}
         T_j & P_j \\
     \end{pmatrix}+(P^2_j)^\top \boxtimes I_{r},\\
     \frac{\partial g^{i,j}}{\partial \mathrm{vec}(P_{j})} = I_{r-n_2} \boxtimes \begin{pmatrix}
         T_i & P_i \\
     \end{pmatrix}-(P^2_i)^\top \boxtimes I_{r},\\
     \frac{\partial \mathrm{vec}(P_i)}{\partial x} = N_k.
\end{cases} 
\end{align}

Given the Jacobian of $g$, we could solve the nonlinear least squares optimization
 \eqref{eq:2ndOpx} by a Levenberg-Marquardt-type method.
If \eqref{eq:2ndOpx} is solved with a global optimizer $x^*$ (i.e., $g(x)=0$),
the $P_k :=P_k(x^*)$ for $2 \leq k \leq n_3$ can be computed using the 
parametrization in \eqref{eq:Pkx},
which in turn can be used to recover the tensor decomposition of $\mathcal{F}$
through finding the generalized  left common eigenmatrix $S$. 
In particular, when $\begin{pmatrix}
         T_k & P_k
     \end{pmatrix},~2 \leq k \leq n_3$ are all diagonalizable, we can find the tensor decomposition based on the following theorem.
\begin{theorem}
    For a tensor $\mathcal{F} \in \mathbb{C}^{n_1 \times n_2 \times n_3}$ with $ r \leq n_1$. 
     Let $x^*$ be a global optimizer for \eqref{eq:2ndOpx} such that $g(x^*) =0$ and  $P_k :=P_k(x^*)$ for $2 \leq k \leq n_3$
 be computed using the parametrization \eqref{eq:Pkx}.
If $\begin{pmatrix}
         T_k & P_k
     \end{pmatrix}$ are all diagonalizable such that 
 $ S_k \begin{pmatrix}
             T_k & P_k
         \end{pmatrix}=D_k S_k $  for $2\leq k \leq n_3$, where $D_k$ is a diagonal matrix,
then, in the generic case,
 $S_2=\ldots=S_{n_3}$ is nonsingular and
 $\mathcal{F}$ has a tensor decomposition given in \eqref{eq:findTDbygceig}. 
 \end{theorem}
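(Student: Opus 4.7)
The plan is to prove the result in three steps: first, show that the matrices $M^{3,k} := \begin{pmatrix} T_k & P_k \end{pmatrix}$ for $2\le k \le n_3$ pairwise commute; second, show that in the generic case they admit a unique (up to row scaling and permutation) simultaneous left diagonalizer, forcing $S_2 = \cdots = S_{n_3}$; and third, show that this common $S$ serves as a generalized left common eigenmatrix of the reduced slices and apply Theorem~\ref{thm:SequDecom}~(i).

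For the first step, I would observe that the parametrization \eqref{eq:Pkx} used to define $P_k(x)$ automatically enforces the linear constraint $\hat{A}\,(\mathrm{vec}(P_2)^\top,\ldots,\mathrm{vec}(P_{n_3})^\top)^\top = \hat{b}$, whose rows encode exactly the linear commuting equations \eqref{eq:linearT}. The hypothesis $g(x^*)=0$ encodes the quadratic equations \eqref{eq:quadT}. Together these two sets are equivalent to the commuting relations $M^{3,i}M^{3,j}=M^{3,j}M^{3,i}$ as was derived right before \eqref{eq:linearT}, so the $M^{3,k}$ mutually commute.

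For the second step, because each $M^{3,k}$ is diagonalizable and they pairwise commute, a standard linear algebra fact produces a common left eigenbasis, i.e.\ a nonsingular $S\in\mathbb{C}^{r\times r}$ with $SM^{3,k}S^{-1}$ diagonal for every $k$. Genericity enters here: for a generic tensor $\mathcal{F}$ at least one matrix, say $M^{3,2}$, has $r$ pairwise distinct eigenvalues, so its left eigenvectors are uniquely determined up to row scaling and permutation. Since any $S_k$ satisfying $S_kM^{3,k}=D_kS_k$ also diagonalizes $M^{3,2}$ (as $M^{3,k}$ commutes with $M^{3,2}$), one concludes $S_k=S_2$ after the standard identification that absorbs row scaling and reordering into the diagonal $D_k$. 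This is the main technical obstacle: one must justify that the simple-spectrum condition for at least one $M^{3,k}$ holds for generic $\mathcal{F}$, which can be argued by tracing the bijective correspondence between generating matrices and tensor decompositions through Theorem~\ref{thm:decom_to_Mjk} and noting that the CP factor $U^{(3)}$ has distinct rows generically.

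For the third step, write $S := S_2 = \cdots = S_{n_3}$, which is nonsingular since each $S_k$ is. The identity $SM^{3,k}=D_kS$ restricted to the first $n_2$ columns reads $ST_k = D_k S_{:,1:n_2}$ for all $2\le k\le n_3$, which is exactly the defining relation \eqref{eq:gceig} for a generalized left common eigenmatrix of $T_2,\ldots,T_{n_3}$. Applying Theorem~\ref{thm:SequDecom}~(i) then yields a decomposition of $\mathcal{F}$ in the form \eqref{eq:findTDbygceig}, completing the proof.
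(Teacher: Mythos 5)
Your proposal is correct and follows essentially the same route as the paper's proof: derive pairwise commutativity of the $M^{3,k}=\begin{pmatrix} T_k & P_k\end{pmatrix}$ from the linear constraints built into the parametrization \eqref{eq:Pkx} together with $g(x^*)=0$, invoke simultaneous diagonalizability of commuting diagonalizable matrices, read off $ST_k=D_kS_{:,1:n_2}$ from the first $n_2$ columns, and conclude via Theorem~\ref{thm:SequDecom}~(i). The only difference is that you are somewhat more explicit than the paper at the step $S_2=\cdots=S_{n_3}$ (the paper simply cites \cite[Theorem 1.3.12]{Horn_Johnson_2012} and asserts a common left eigenmatrix, whereas you justify the uniqueness of the diagonalizer up to scaling and permutation by noting that generically some $M^{3,k}$ has simple spectrum), which is a welcome clarification rather than a different approach.
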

 \begin{proof}
Since $x^*$ is a global optimizer with $g(x^*) =0$,
by the construction of optimization problem \eqref{eq:2ndOpx},
the $P_k =P_k(x^*)$ for $2 \leq k \leq n_3$ must satisfy \eqref{eq:linearT} and \eqref{eq:quadT},
which by \eqref{eq:def:P} are equivalent to the commuting equations \eqref{eq:commute}. 
     Therefore, for $2\leq k \leq n_3$, all the matrices $M^{3,k} = \begin{pmatrix}
         T_k & P_k
     \end{pmatrix}$ commute pairwise.
 By our assumption, $ M^{3,k} =\begin{pmatrix}
         T_k & P_k
     \end{pmatrix}$ are all diagonalizable.
Hence, it follows from \cite[Theorem 1.3.12]{Horn_Johnson_2012} that $\begin{pmatrix}
         T_k & P_k
     \end{pmatrix}$ for $2 \leq k \leq n_3$ are simultaneously diagonalizable and have the same left eigenmatrix 
$S:=S_2=\ldots=S_{n_3}$. 
     As a result, for $2 \leq k \leq n_3$,
     \begin{align*}
         S \begin{pmatrix}
             T_k & P_k
         \end{pmatrix}=D_k S \implies  S T_k=D_k S_{:,1:n_2} 
     \end{align*}
So, $S$ is a generalized left common eigenmatrix of $T_2 \ldots T_{n_3}$.
In the generic case,  $S$ is a nonsingular. 
Hence,  $\mathcal{F}$ has a tensor decomposition given in \eqref{eq:findTDbygceig} by 
conclusion (i) of Theorem \ref{thm:SequDecom}.
 \end{proof}

By the theorem above, the matrix $S$ can be obtained by solving for the left eigenvectors of $\begin{pmatrix}
    T_k & P_k
\end{pmatrix}$ for any $k \in \{2, \ldots, n_3\}$.
We can simply take $k =2$ in our algorithm and numerical experiments.
Then, with the matrix $S$, we can get a tensor decomposition of $\mathcal{F}$ as in \eqref{eq:findTDbygceig}.
To summarize, we propose the following Algorithm~\ref{alg:findr1:step2} for finding the tensor decomposition.
We call Algorithm~\ref{alg:findr1:step2} the second-stage algorithm, as it utilizes partial
results from the first-stage Algorithm~\ref{alg:findr1} when it could not find 
the entire matrix $S$.

\smallskip
\begin{alg}
    \label{alg:findr1:step2}
The Second-Stage Algorithm for Tensor Decomposition
\smallskip

\noindent \textbf{Input:} Tensor $\mathcal{F}\in \mathbb{C}^{n_1 \times n_2 \times n_3}$
 with rank $ n_2 < r \le n_1$, the pre-processed tensor $\mathcal{T}$ and 
 the partial generalized left common eigenvectors $s^1,\cdots,s^p$ of $\{T_2\}_{k=2}^{n_3}$
 given by Algorithm~\ref{alg:findr1} with $p < r$.

\vskip 2pt 
\begin{itemize}
\item [Step~1]
Construct the linear system \eqref{eq:finallinear} using $\mathcal{T}$ and $s^1,\cdots,s^p$.

\smallskip
\item [Step~2]
Construct the function $g(x)$ as in \eqref{def:gx} and solve the nonlinear least squares 
optimization \eqref{eq:2ndOpx} with Jacobian \eqref{def:Jg} to find a global minimizer $x^*$.

\smallskip
\item [Step~3] 
Compute $P_k(x^*)$ for $2\leq k \leq n_3$ as in \eqref{eq:Pkx} using the minimizer $x^*$.

\smallskip
\item [Step~4] Compute $S=(s^1, \ldots, s^r)^\top$, whose rows are the left eigenvectors of $\begin{pmatrix}
    T_2 & P_2
\end{pmatrix}$. For $1 \leq i \leq r$ and $2 \leq k \leq n_3$, 
let $\lambda_{i,k}$ be the left eigenvalue of $\begin{pmatrix}
    T_k & P_k
\end{pmatrix}$ associated with $s^i$. Then, get a tensor decomposition of $\mathcal{T}$ as in \eqref{eq:decombyS}.

\smallskip
\item [Step~5] Solve the linear least squares and get tensor decomposition of $\mathcal{F}$ as in \eqref{eq:findTDbygceig}.

\end{itemize}
\vskip 2pt 
\textbf{Output:} A decomposition of $\mathcal{F}$:
$ \quad \mathcal{F}={U}^{(1)} \circ U^{(2)} \circ U^{(3)} $
\end{alg}
\smallskip

\section{Numerical Experiments}
\label{sc:example}
In this section, we demonstrate the performance of our two-stage (TS) optimization methods
 (Alg.~\ref{alg:findr1} and Alg.~\ref{alg:findr1:step2}).
We compare the TS method with the following methods:
\begin{itemize}
\smallskip
    \item \textbf{The classical Nonlinear Least Squares (NLS) method.}
    We used the command {\tt cpd\_nls} in the software {\tt Tensorlab} \cite{TensorLab} to apply
    the NLS method, which is a nonlinear optimization based method.
    The initial point for {\tt cpd\_nls}  is provided using {\tt cpd\_rnd} in the {\tt Tensorlab}. 

\smallskip
    \item \textbf{The classical Alternating Least Squares (ALS) method.}
    We used the command {\tt cpd\_als} in the software {\tt Tensorlab} \cite{TensorLab} to apply
    the ALS method, which is a nonlinear optimization based method.
    The initial point for {\tt cpd\_als}  is provided using {\tt cpd\_rnd} in the {\tt Tensorlab}. 
    
\smallskip    
    \item \textbf{The Normal Form (NF) method  \cite{telen2021normal}.} 
    We applied the NF method using {\tt cpd\_hnf} \cite{telen2021normal} in {\tt Julia}.
    The default setting of the code, namely the {\tt \_eigs+newton} option in \cite{telen2021normal} is used.
    This is a direct method that relates tensor decomposition problems to solving polynomial systems using linear algebra operations.
    For order-3 tensors, under the \cite[Conjecture 1]{{telen2021normal}}, the NF method
     can return a tensor decomposition if the rank
    $n_1 \geq r \le \phi (n_2-1) (n_3-1)$ for a fixed constant $\phi \in [0,1)$.
     But its computational complexity scales as $M^{\frac{5}{2}\lceil \frac{1}{1-\phi} \rceil +1}$ \cite[Theorem 1.1, Theorem 4.2]{telen2021normal}, where $M=n_1n_2n_3$.
    
\smallskip    
    \item \textbf{The method by Domanov and De Lathauwer(DDL) \cite{Domanov2017}.}
    We applied the DDL method using {\tt cpd3\_gevd} in the software {\tt Tensorlab+}\cite{tensorlabplus2025}.
    This method is also a direct method for tensor decomposition. 
    When $r > n_2$, DDL needs an integer parameter $l >0$ to construct an auxiliary matrix. 
    Larger $l$ will increase the size of the auxiliary matrix and hence, increase the computational cost.
    The DDL method can obtain tensor decompositions under certain dimension conditions 
    \cite[Theorem 8]{Domanov2017}.  
    The DDL method could practically solve tensor decomposition problems when $ r> n_1$, while the TS and NF 
    methods cannot.
\end{itemize}

\smallskip  
For easy implementation, we simply apply the built-in Levenberg–Marquardt method along 
with our provided Jacobian in {\tt MATLAB}'s {\tt fsolve} function
 to solve our first-stage and second-stage optimization problems, 
 i.e., the problems \eqref{fun:fmink} and \eqref{eq:2ndOpx},
 in Alg.~\ref{alg:findr1} and Alg.~\ref{alg:findr1:step2}, respectively.
Of course, other advanced optimization methods could be also applied for a better quality implementation.
Here, note that the NF method is implemented in {\tt Julia}, which is generally faster 
than implementing the same method in {\tt Matlab}.

 We conduct the experiments in {\tt MATLAB} R2023b on a Mac Mini m2pro chip with RAM 32GB. 
The relative backward tensor decomposition error is computed as
\[
\textbf{err-rel} \coloneqq \lVert \mathcal{F}- U^{(1)}\circ U^{(2)}\circ U^{(3)} \rVert_F/\lVert \mathcal{F} \rVert_F,
\]
where  $U^{(1)}$, $U^{(2)}$ and $U^{(3)}$ are the decomposition matrices produced by the algorithm,
and $\lVert \cdot \rVert_F$ denotes the Frobenius norm of a tensor.
In the following experiments, we consider a tensor decomposition with $\textbf{err-rel} \leq 1 \times10^{-6}$ as 
a `success' decomposition.
In the numerical result tables, ``Error" means the $\textbf{err-rel}$,
``Time" refers to the average CPU time of all successful runs
of the algorithm and ``S\_rate" denotes the success rate of 
the method for finding a correct tensor decomposition across the total runs of the algorithm.
We start with two examples involving specially designed tensors.
Then, we would test all the algorithms on randomly generated tensors. 
For those randomly generated order-3 tensors, by \cite[Theorem 1.1]{Chiantini_AnAlgorithm_2014} and \cite[Theorem 2.1]{Hauenstein2019},
 when $r=n_1$, the value $(n_2-1)(n_3-1)$ plays a critical role in determining the uniqueness of the tensor decomposition.
Therefore, we test tensors with
\begin{itemize}
    \item $n_1 = r = (n_2-1)(n_3-1)$. For our selected examples, the decomposition is unique.  
    \item $n_1 = r = (n_2-1)(n_3-1)+1$. For general tensors of such size, the number of decompositions is finite.
    \item $n_2n_3 \geq n_1 = r > (n_2-1)(n_3-1)+1$. For general tensors of such size, there are infinitely many decompositions.
\end{itemize}

\begin{example}
Consider the following tensor $\mathcal{F} \in \mathbb{C}^{5 \times 3 \times 3}$
as
\[
\mathcal{F} \coloneqq
\begin{pmatrix}
\begin{array}{rrr}
  -38 &  56 &  82 \\
   42 & 152 &  42 \\
   78 & 109 & -48 \\
  102 & -13 & -105 \\
   18 &  35 &   0
\end{array}& \rvline
&
\begin{array}{rrr}
  -55 &  126 &  92 \\
   17 &  352 &  38 \\
   93 &  226 & -63 \\
  144 & -163 & -123 \\
   27 &  -18 &  15
\end{array}&
\rvline &
\begin{array}{rrr}
   31 &  180 & -14 \\
  -77 &  434 &  88 \\
  -85 &  136 &  71 \\
   10 & -313 &  43 \\
   37 &  -96 &   1
\end{array}
\end{pmatrix}.
\]
This is a rank $5$ tensor with exact decomposition matrices 
\begin{equation}\label{eq:533TD1}
\begin{aligned}
    U^{(1)} =&
\begin{pmatrix}
  3 &  2 & -3 &  4 &  1 \\
  5 &  6 &  1 &  8 &  3 \\
  9 &  2 &  4 & -1 &  2 \\
 -3 & -5 &  5 &  1 & -2 \\
  3 & -2 &  0 &  1 & -2
\end{pmatrix},
\\
U^{(2)} =&
\begin{pmatrix}
  1 & -2 &  3 &  4 &  1 \\
  5 &  6 &  1 &  9 &  2 \\
  1 &  1 & -3 &  4 & -2
\end{pmatrix}
~\mbox{ and }~ 
U^{(3)} = 
\begin{pmatrix}
  2 &  1 &  6 &  1 & -2 \\
  3 &  5 &  7 &  1 &  3 \\
  1 &  9 & -5 &  1 &  3
\end{pmatrix}.
\end{aligned}
\end{equation}
\end{example}

By \cite[Theorem 2.1]{Hauenstein2019}, a generic tensor with
 $n_1=r = (n_2-1)(n_3-1)+1$ has $(n_2+n_3-2)!/((n_2-1)!(n_3-1)!)$ tensor decompositions. 
This implies the above tensor $\mathcal{F}$ generically has $6$ tensor decompositions.
By applying the TS method with rank $r=5$, we obtain the decomposition matrices as follows:
\begin{gather*}
\begin{aligned}
    U_{ts}^{(1)}=&\begin{pmatrix}
  0.0236 & -0.6261 &  3.8192 & -1.7013 & -0.2085 \\
  0.0472 & -1.8782 & -1.2731 & -2.8355 & -0.6255 \\
 -0.0059 & -1.2522 & -5.0923 & -5.1038 & -0.2085 \\
  0.0059 &  1.2522 & -6.3654 &  1.7013 &  0.5212 \\
  0.0059 &  1.2522 & -0.0000 & -1.7013 &  0.2085
        \end{pmatrix},\\
        U_{ts}^{(2)}=&\begin{pmatrix}
  677.3 &   3.2 & -14.1 &  -3.5 &  19.2 \\
 1524.0 &   6.4 &  -4.7 & -17.6 & -57.6 \\
  677.3 &  -6.4 &  14.1 &  -3.5 &  -9.6
        \end{pmatrix},\\
        U_{ts}^{(3)}= &\begin{pmatrix}
  1.0000 &  1.0000 &  1.0000 &  1.0000 &  1.0000 \\
  1.0000 & -1.5000 &  1.1667 &  1.5000 &  5.0000 \\
  1.0000 & -1.5000 & -0.8333 &  0.5000 &  9.0000
        \end{pmatrix}
        \end{aligned}
\end{gather*}
with the $\textbf{err-rel} = 3.6812 \times 10^{-8}$.
After permutation and rescaling, we can see that this tensor decomposition is essentially
the same as the tensor decomposition \eqref{eq:533TD1}. 
Since Algorithm \ref{alg:findr1} involves random choices of orthogonal matrices $Q_i$,
by running it multiple times, it also produces other tensor decompositions.
For example, a different set of tensor decomposition matrices given by Algorithm \ref{alg:findr1} is
\[
\begin{aligned}
      \hat{U}_{ts}^{(1)}= &\begin{pmatrix}
  -0.2410 &  -3.3695 &  14.5619 &  -1.5392 &  -0.2580 \\
  -1.2389 &  -6.5069 &   0.5072 &  -2.5653 &  -0.7354 \\
  -0.0949 &  -1.4200 &  -7.8001 &  -4.6175 &  -0.3158 \\
   0.8689 &   0.0278 & -22.4828 &   1.5392 &   0.6016 \\
   1.6429 &  -1.3645 &   2.6806 &  -1.5392 &   0.2002
        \end{pmatrix},
\end{aligned}
\]
\begin{equation}\label{eq:decom:Uts}
\begin{aligned}
        \hat{U}_{ts}^{(2)}= &\begin{pmatrix}
   3.1850 &  -5.9850 &  -4.5449 &  -8.6336 &  17.4603 \\
   6.3699 & -13.4662 &  -1.5150 & -13.4541 & -52.3810 \\
  -6.3699 &  -5.9850 &   4.5449 &   5.2862 &  -8.7302
        \end{pmatrix},\\
        \hat{U}_{ts}^{(3)}= &\begin{pmatrix}
  1.0000 &  1.0000 &  1.0000 &  1.0000 &  1.0000 \\
 -1.5000 &  1.0000 &  1.1667 &  1.7623 &  5.0000 \\
 -1.5000 &  1.0000 & -0.8333 & -0.3710 &  9.0000
        \end{pmatrix}.
\end{aligned}
\end{equation}
The above decomposition \eqref{eq:decom:Uts} is slightly different from \eqref{eq:533TD1}.
Comparing the third decomposition matrices $U^{(3)}$ and $\hat{U}_{ts}^{(3)}$,
only the columns $U^{(3)}_{:,1}=\begin{pmatrix}
    2 & 3 & 1
\end{pmatrix}^\top$ and $(\hat{U}^{(3)}_{ts})_{:,4}=\begin{pmatrix}
    1 & 1.7623 & -0.3710
\end{pmatrix}^\top$
 are different, while 
other columns of $\hat{U}^{(3)}_{ts}$ are just scalar multiple of the columns of $U^{(3)}$.
In fact, define the matrix
\begin{align} \label{eq:def:Ukr}
    U \coloneqq \begin{pmatrix}
    U^{(2)} \odot U^{(3)} & (\hat{U}^{(2)}_{ts})_{:,4} \odot (\hat{U}^{(3)}_{ts})_{:,4}
\end{pmatrix} \zhc{\in \mathbb{R}^{9 \times 6}}.
\end{align} 
We can observe that 
\begin{equation}\label{eq:Ukr}
    U \begin{pmatrix}
    -1.9491 &0.1639& -0.4397 &0.6767& -0.4004 &-1
\end{pmatrix}^\top=0.
\end{equation}
Moreover, we can check any $5$ columns of $ U$ are linearly independent.
Using any $5$ columns of $ U$, we may construct a tensor decomposition of $\mathcal{F}$.
Hence, it verifies that this tensor $\mathcal{F}$ has ${6 \choose 5}=6$ tensor decompositions.


\begin{table}[ht]
\caption{
Average CPU time, error, and success rate of TS, NF, DDL, NLS, ALS methods.}
\centering
 \begin{tabular}{||c|c|c|c|c|c||}
\hline
        & TS        & NF(\tt Julia)   & DDL & NLS       & ALS  \\ \hline
Error   & 5.9448E-09 & Fail & Fail & 3.4321E-10 & Fail \\ \hline
Time    & 0.1164     & Fail & Fail & 1.1992     & Fail \\ \hline
S\_rate & 1          & Fail & Fail & 0.6        & Fail \\ \hline
 \end{tabular}
\label{Example4.1}
\end{table}

For comparison purpose, we also apply the NF, DDL, NLS and ALS methods to solve this problem. 
We run each algorithm $10$ times, as some randomization procedures may be involved in 
the implementation of these algorithms.
The performance of the algorithms is summarized in Table~\ref{Example4.1},
where we see that the NF, DDL and ALS methods consistently fail to 
find a correct tensor decomposition. 
We believe the failure of the NF and DDL methods is due to the fact
 that this tensor does not have a unique tensor decomposition, while the failure of
the ALS method is because of converging to local minimums.
Although the NLS method sometimes finds a correct tensor decomposition, 
its success rate is only 60\%,
while our TS method always succeeds to find a tensor decomposition with much less CPU time.

\smallskip
\begin{example}
Consider the tensor $\mathcal{F} \in \mathbb{C}^{8 \times 5 \times 3}$
whose entries are given as
{\small
\[
\mathcal{F}_{i_1,i_2,i_3}= \
{\left(i_1-\frac{7}{2} \right)}^{\frac{4}{5}i_2+i_3-\frac{9}{5}}
\]}
for all $i_1,i_2,i_3$ in the corresponding range. 
Since the flatten matrix $\mathrm{Flatten(\mathcal{F},1)} $ has rank $8$, 
the rank of $\mathcal{F}$ is greater or equal to $8$.
On the other hand, since
{\small
\[
{\left(i_1-\frac{7}{2} \right)}^{\frac{4}{5}i_2+i_3-\frac{9}{5}}= \left(i_1-\frac{7}{2} \right)^{\frac{4}{5}(i_2-1)} \left(i_1-\frac{7}{2} \right)^{i_3-1}. 
\]}
we have the following rank $8$ decomposition:
\begin{align}\label{example:533TD}
\begin{aligned}
     \mathcal{F}= \sum_{s=1}^8 e_s \otimes &\begin{pmatrix}
        (1 & (s-\frac{7}{2})^{\frac{4}{5}} & (s-\frac{7}{2})^{\frac{8}{5}} & (s-\frac{7}{2})^{\frac{12}{5}} & (s-\frac{7}{2})^{\frac{16}{5}}
    \end{pmatrix}^\top \\
    \otimes &\begin{pmatrix}
        1& s-\frac{7}{2}& (s-\frac{7}{2})^{2}
    \end{pmatrix}^\top.
\end{aligned}
\end{align}
Hence, $\mathcal{F}$ is a rank $8$ tensor.
\end{example}

By \cite[Theorem 1.1]{Chiantini_AnAlgorithm_2014}, a generic tensor
with $n_1 = r = (n_2-1)(n_3-1)$ has a unique decomposition when $n_1n_2n_3 \leq 15000$.
This implies the above tensor $\mathcal{F}$ generically
has a unique decomposition.
By applying the TS method with rank $r=8$,
we obtain the decomposition matrices as follows:
{\tiny
\begin{gather*}
\begin{aligned}
    U^{(1)}= &
    \begingroup
    \setlength\arraycolsep{2pt}
    \begin{pmatrix}
0  & 0  & 0   & 0   & 0  & 0  & 0   & 0.0256 + 0.0856i  \\
0  & 0  & 0  & 0   & 0.2860 + 0.1935i  & 0  & 0   & 0   \\
0   & 0   & 0  & -0.0677 - 0.0004i & 0   & 0   & 0  & 0   \\
0   & 0   & 0.0780   & 0  & 0   & 0   & 0  & 0  \\
0   & 0   & 0   & 0   & 0   & 0   & 0.1308 - 0.0268i  & 0   \\
0   & 0   & 0  & 0  & 0  & 0.2450 - 0.0224i  & 0   & 0  \\
0  & -0.4617  & 0   & 0   & 0   & 0   & 0  & 0   \\
1   & 0  & 0  & 0  & 0  & 0  & 0   & 0 
        \end{pmatrix}\endgroup,\\
        U^{(2)}= &\begingroup
    \setlength\arraycolsep{2pt}\begin{pmatrix}
1  & -2.17  & 12.82  & -14.78 + 0.09i & 2.40 - 1.62i  & 4.05 +0.37i & 7.34 + 1.50i & 3.21 - 10.72i  \\
3.33  & -5.90  & 7.37  & 6.84 - 5.03i  & -1.37 + 3.77i & 8.43 + 0.77i & 10.15 + 2.08i & 7.71 + 21.98i  \\
11.10  & -16.08  & 4.23  & -1.48 + 4.64i & -1.53 - 5.32i & 17.54 + 1.60i & 14.04 + 2.88i & -39.87 - 27.59i \\
36.96  & -43.80  & 2.43  & -0.88 - 2.the 66i & 6.05 + 4.71i  & 36.51 + 3.33i & 19.42 + 3.98i & 100.89 - 2.33i  \\
123.11  & -119.31  & 1.40  & 1.31 + 0.94i  & -10.59 - 0.36i & 75.98 + 6.94i & 26.86 + 5.50i & -167.04 + 127.36i
        \end{pmatrix}\endgroup,\\
        U^{(3)}= &\begin{pmatrix}
1   & 1   & 1  & 1   & 1   & 1  & 1  & 1   \\
4.5   & 3.5   & 0.5  & -0.5  & -1.5  & 2.5  & 1.5  & -2.5  \\
20.25  & 12.25  & 0.25  & 0.25   & 2.25   & 6.25  & 2.25  & 6.25   \\
        \end{pmatrix}
  \end{aligned}
\end{gather*}}
with $ \textbf{err-rel} =5.8642 \times 10^{-11}$.
This tensor decomposition is in fact the same as the decomposition in \eqref{example:533TD},
since by applying a permutation matrix  $P_{\sigma}$ with $\sigma = (8, 5, 4, 3, 7, 6, 2, 1)$,
$\mathcal{F}=U^{(1)}P_{\sigma}D^{-1} \circ U^{(2)}P_{\sigma}D \circ U^{(3)}P_{\sigma}$, 
is exactly \eqref{example:533TD}, where  
 $D$ is the diagonal matrix as
\begin{eqnarray*}
D &= &\mathrm{diag}(
    0.0256 + 0.0856i, ~ 0.2860+0.1935i, ~ -0.0677 - 0.0004i, ~ 0.0780 , \\
  && \qquad  0.1308 - 0.0268i, ~0.2450 - 0.0224i, ~ -0.4617, ~1).
\end{eqnarray*}

\begin{table}[ht]
\caption{
Average CPU time, error, and success rate of TS, NF, DDL, NLS, and ALS methods.}
\centering
 \begin{tabular}{||c|c|c|c|c|c||}
\hline
        & TS         & NF(\tt Julia)         & DDL        & NLS  & ALS  \\ \hline
Error   & 1.7045E-09 & 4.5558E-16 & 4.0161E-05 & Fail & Fail \\ \hline
Time    & 0.86      & 0.011      & 0.02       & Fail & Fail \\ \hline
S\_rate & 1          & 1          & 1          & Fail & Fail \\ \hline
 \end{tabular}
\label{Example4.2}
\end{table}

As before, we also run $10$ times
the NF, DDL, NLS and ALS methods to find the tensor decomposition of $\mathcal{F}$. 
The performance of the algorithms is summarized in Table~\ref{Example4.2},
where we can see that the NLS and ALS methods consistently fail for this example,
since these methods always converge to local minimizers.
On the other hand, TS, NF and DDL methods always find a correct tensor decomposition.
For this example, we observe that the NF method uses the least CPU time and get the best accuracy. 


\begin{table}[!ht]
 \centering
 \caption{
Average CPU time, error, and success rate of TS, NF, and DDL methods
}
{\footnotesize
\begin{tabular}{||c|c|c|c|c||}
  \hline
 Dimension & Rank & \multicolumn{3}{|c||}{TS (\tt MATLAB)}\\
  \hline
  $(n_1,n_2,n_3)$ & r & Time & Error& S\_rate  \\  \hline
 (9,4,4) & 9    & 0.116 & 1.4432E-11 & 1 \\ \hline
 (16,5,5) & 16  & \textbf{0.1066}  & 2.3651E-11 & 1    \\ \hline
 (30,7,6) & 30  & \textbf{0.5944}    & 2.3091E-10  & 1    \\ \hline
 (36,7,7) & 36  & \textbf{45.485}  & \textbf{7.4882E-11} & 1    \\ \hline
 (42,8,7) & 42  & \textbf{73.171}  & 3.4716E-10 & 1    \\ \hline
 (56,9,8) & 56  & \textbf{328.91}  & \textbf{6.0562E-11} & 1    \\ \hline
 \end{tabular}
 \vspace{0.05in}
 
 \begin{tabular}{||c|c|c|c|c|c|c|c||}
 \hline
 Dimension & Rank & \multicolumn{3}{|c|}{DDL(\tt MATLAB)}& \multicolumn{3}{|c||}{NF(\tt Julia)} \\
  \hline
  $(n_1,n_2,n_3)$ & r & Time & Error& S\_rate & Time & Error & S\_rate  \\  \hline

  (9,4,4) & 9 &   0.2762  & 2.1613E-07   & 1     & \textbf{0.03467}  & \textbf{8.1644E-15}     & 1
  \\\hline
  (16,5,5) & 16 & 1.5878 & 2.7062E-07 & 0.98   &  0.3733  & \textbf{1.0004E-15}  & 1 
  \\\hline
  (30,7,6) & 30 &  \multicolumn{3}{|c|}{Fail} & 49.2177   & \textbf{2.2273E-15}      & 1
  \\\hline
  (36,7,7) & 36  & \multicolumn{3}{|c|}{Fail} 
  & 385.0445  & 7.7881E-10     & 0.98 \\\hline
   (42,8,7) & 42  &\multicolumn{3}{|c|}{Fail}
  &  2701.2269  & \textbf{5.2956E-11}     & 0.92 \\\hline
   (56,9,8) & 56 & \multicolumn{3}{|c|}{Fail} & \multicolumn{3}{|c||}{Fail}
\\\hline
 \end{tabular}
\vspace{0.05in}   
}
 \label{Table:case2}
\end{table}

In the following, we would test all the algorithms on randomly generated tensors. 
\begin{example}\label{example:r=(n2-1)(n3-1)}
In this example, we compare the performance of the TS, NF, DDL, NLS and ALS methods
for randomly generated tensors
 $\mathcal{F} \in \mathbb{R}^{n_1 \times n_2 \times n_3}$ 
with the rank $ r = n_1 = (n_2-1)(n_3-1)$. 
Again, by \cite[Theorem 1.1]{Chiantini_AnAlgorithm_2014}, these randomly generated 
tensors generically has a unique decomposition. 
For each case of  $r$ and $(n_1,n_2,n_3)$ in Table~\ref{Table:case2},
we generate $50$ instances of tensor $\mathcal{F}$.
\end{example}

In this experiment, the NLS and ALS methods always fail to find a tensor decomposition.
Therefore, we only report the results of the TS, DDL and NF methods
in Table~\ref{Table:case2}.
As shown in  Table~\ref{Table:case2}, our TS method
successfully solves almost all the testing problems, the NF method solves most
cases, while the DDL method is effective only for smaller-scale problems.
Specifically, for the instance $(n_1, n_2, n_3)=(30,7,6)$ with rank $r=30$, 
the {\tt MATLAB} implementation of the DDL method (with the default setting) crashes on our computer.
The DDL method also fails in other cases due to insufficient memory on our computer.
For example, for $(n_1, n_2, n_3)=(36,7,7)$ with rank $r=36$, it requires  
allocating an auxiliary $148{,}995 \times 148{,}995$ matrix, which needs roughly $165.4$ GB memory,
For the largest instance in Table~\ref{Table:case2}, where $(n_1, n_2, n_3)=(56,9,8)$ with rank $r=56$, 
the NF method also fails due to the extremely high memory requirements,
while the average CPU time for TS method to solve this problem only takes about $329$ seconds.
Nevertheless, as shown in Table~\ref{Table:case2}, the NF method performs very efficiently 
for solving small-scale problems. 
However, as the problem size increases, except for the first case, our TS method takes significantly less CPU time compared to
the NF method.

\begin{table}[]
\label{Table:infDecom}
 \centering
 \caption{
Average CPU time, error, and success rate of the TS method when $r=(n_2-1)(n_3-1)+1$
}

\begin{tabular}{||c|c|c|c|c||}
  \hline
 Dimension & Rank & \multicolumn{3}{|c||}{TS (\tt MATLAB)}\\
  \hline
  $(n_1,n_2,n_3)$ & r & Time & Error& S\_rate  \\  \hline
(10,4,4) & 10   & 0.0748   &  1.4307e-10 & 1    \\ \hline
(17,5,5)& 17 & 0.0722 & 7.3186e-10 & 1 \\ \hline
(31,7,6)& 31 & 0.3078   & 1.5148e-10 & 1    \\ \hline
(37,7,7)& 37 & 15.393  & 1.8573e-09 & 1 \\ \hline
(43,8,7)& 43 & 21.458   & 2.2402e-10 & 1    \\ \hline
 \end{tabular}
 
 \label{Table:case2:n_2-1n_3-1+1}
\end{table}

\begin{table}[ht]
 \centering
 \caption{
Average CPU time, error, and success rate of TS method when $r>(n_2-1)(n_3-1)+1$
}

\begin{tabular}{||c|c|c|c|c||}
  \hline
 Dimension & Rank & \multicolumn{3}{|c||}{TS (\tt MATLAB)}\\
  \hline
  $(n_1,n_2,n_3)$ & r & Time & Error& S\_rate  \\  \hline
(11,4,4) & 11   &  0.0586 & 6.8996e-11 & 1    \\ \hline
(18,5,5)& 18 & 0.0942  & 2.5945e-10  & 1 \\ \hline
(32,7,6)& 32 &  0.2174 & 1.4844e-10 & 1    \\ \hline
(39,7,7)& 39 & 10.256 & 4.3412e-10 & 1 \\ \hline
(45,8,7)& 45 & 14.206 &  2.5763e-10 & 1   \\ \hline
 \end{tabular}
 
 \label{Table:case2:n_2-1n_3-1+2}
\end{table}

\begin{example}\label{example:r>=(n2-1)(n3-1)+1}
In this example, we compare the performance of the TS, NF, DDL, NLS and ALS methods
for randomly generated tensors
 $\mathcal{F} \in \mathbb{R}^{n_1 \times n_2 \times n_3}$ 
 with rank $n_1=r>(n_2-1)(n_3-1)$. 
    By \cite[Theorem 2.1]{Hauenstein2019}, there will be finitely many
 tensor decompositions when $n_1=r=(n_2-1)(n_3-1)+1$ and
  infinitely many tensor decompositions when $n_2n_3>n_1=r>(n_2-1)(n_3-1)+1$.
    For each case of  $r$ and $(n_1,n_2,n_3)$ in Table~\ref{Table:case2:n_2-1n_3-1+1}
    and Table~\ref{Table:case2:n_2-1n_3-1+2}, we generate $50$ instances of tensor $\mathcal{F}$.
\end{example}   

Since all the tensors generated in  Table~\ref{Table:case2:n_2-1n_3-1+1}
    and Table~\ref{Table:case2:n_2-1n_3-1+2} generically do not have a unique solution,
both NF and DDL methods can not find a correct tensor decomposition. 
Moreover, in our numerical experiments, the NLS and ALS methods could not find the correct tensor decompositions of these tensors either.
However, we can see from Table~\ref{Table:case2:n_2-1n_3-1+1}
    and Table~\ref{Table:case2:n_2-1n_3-1+2} that our TS method again
always find a correct tensor decomposition in a reasonable time.
It is interesting to notice that it is usually much more efficient for TS method to solve the problems in Example  \ref{example:r>=(n2-1)(n3-1)+1} 
when the decompositions are not unique than those problems in Example \ref{example:r=(n2-1)(n3-1)} where the
tensors generically have a unique decomposition.

\section{Conclusion}
In this paper, we propose a novel two-stage optimization algorithm to solve the
order-3 tensor decomposition problem with a tensor rank that does not exceed the largest dimension. 
In the first stage, the algorithm preprocesses the tensor and focuses on finding the generalized left common eigenmatrix $S$ of the slices of the reduced tensor. In the ideal case, all the generalized left common eigenvectors of the slices can be found
and a tensor decomposition can be subsequently derived based on $S$ and solving linear least squares. 
If not all the generalized left common eigenvectors are found in the first stage, 
the second stage algorithm will use the partial rows 
of the matrix $S$ obtained from the first stage and the generating polynomials to 
recover the entire $S$.
Then, a tensor decomposition can then be constructed based on $S$ by solving linear least squares.
By comparing with other commonly used and state-of-the-art methods, our proposed two-stage optimization algorithm is highly efficient and robust for solving the
 order-3  Middle-Rank Case tensor decomposition problems, even when the tensor decompositions are not unique.

\bibliographystyle{plain} 
\bibliography{citation}
\end{document}